\pgfplotsset{compat=1.9}
\newtheorem{theorem}{Theorem}[section]
\theoremstyle{definition}
\newtheorem{definition}[theorem]{Definition}
\newtheorem{example}[theorem]{Example}
\newtheorem{corollary}[theorem]{Corollary}
\newtheorem{proposition}[theorem]{Proposition}
\newtheorem{question}[theorem]{Question}
\newtheorem{conjecture}[theorem]{Conjecture}
\theoremstyle{remark}
\newtheorem{remark}[theorem]{Remark}
\numberwithin{equation}{section}
\begin{document}

\title{Coincidence Point Sets in Digital Topology}

\author{Muhammad Sirajo Abdullahi}

\curraddr[(MS Abdullahi, P. Kumam and J. Abubakar)]{\textit{KMUTTFixed Point Research Laboratory},  KMUTT-Fixed Point Theory and Applications Research Group, SCL 802 Fixed Point Laboratory, Department of Mathematics, Faculty of Science, King Mongkut's University of Technology Thonburi (KMUTT), 126 Pracha-Uthit Road, Bang Mod, Thrung Khru, Bangkok 10140, Thailand}
\email{abdullahi.sirajo@udusok.edu.ng [M.S. Abdullahi]}
\email{poom.kumam@mail.kmutt.ac.th [P. Kumam]}
\email{abubakar.jamilu@udusok.edu.ng [J. Abubakar]}

\author{Poom Kumam} 
\curraddr{Center of Excellence in \textit{Theoretical and Computational Science (TaCS-CoE)},  Science Laboratory Building, King Mongkut's University of Technology Thonburi (KMUTT), 126 Pracha-Uthit Road, Bang Mod, Thrung Khru, Bangkok 10140, Thailand}
\email{poom.kumam@mail.kmutt.ac.th [P. Kumam]}

\address[MS Abdullahi and J. Abubakar]{Department of Mathematics, Faculty of Science, Usmanu Danfodiyo University, Sokoto, Nigeria}
\email{abdullahi.sirajo@udusok.edu.ng [M.S. Abdullahi]}
\email{abubakar.jamilu@udusok.edu.ng [J. Abubakar]}



\author{Jamilu Abubakar} 


\thanks{The first and third authors were supported by the ``Petchra Pra Jom Klao Ph.D. Research Scholarship from King Mongkut's University of Technology Thonburi".}




\subjclass[2010] {Primary: 47H10, 54E35; Secondary: 68U10}

\keywords{Digital topology, Coincidence point set, Common fixed point set, Digital continuous maps, Fixed points, Retractions}

\begin{abstract}
In this article, we investigate some properties of the coincidence point set of digitally continuous maps. Following the Rosenfeld graphical model which seems more combinatorial than topological, we expect to achieve results that might not be analogous to the classical topological fixed point theory. We also introduce and study some topological invariants related to the coincidence and common fixed point sets for continuous maps on a digital image. Moreover, we study how these coincidence point sets are affected by rigidity and deformation retraction. Lastly, we present briefly a concept of divergence degree of a point in a digital image.
\end{abstract}

\maketitle

\section{Introduction} 		      

Topology is a branch of Mathematics that studies the relationship between spaces, especially equivalence between them under continuous mappings. It provides a lot of ease to many applications by reducing cost of computation through providing theoretical foundations and methods more efficient than the non topological ones. Fixed point theory in particular, plays an important and fundamental role in numerous areas of mathematics including functional and mathematical analysis, pure and applied topology, fuzzy theory etc. It has always provided us with a major theoretical tool in fields as widely as differential equations, topology, economics, game theory, dynamics, optimal control and functional analysis which leads to various and important applications in mathematics and applied sciences.

In metric spaces, this theory begins with the Banach fixed point theorem \cite{Banach} (also known as the Banach contraction principle), which guarantees the existence and uniqueness of a fixed point of a certain map $f: X \longrightarrow X$ of a complete metric space $X$, it additionally provides a constructive method of finding such a fixed point of the map $f$. For this direction (see. \cite{AbdullahiAzam17A, AbdullahiPoom18, Azametal09, Nadler, SintuKumam11}).

Topologically, the tools of fixed point theory are: the Lefschetz number, fixed point index, Nielsen number and the topological degree (for root problems). In classical topology, the value of $MF(f)$ (i.e. the minimum number of fixed points in the homotopy class of $f$) is generally hard to compute. The Lefschetz number $L(f)$ and the Nielsen number $N(f)$ (homotopy invariant lower bound for $MF(f)$) are often used to obtain $MF(f)$, where the former is homological in nature and gives a very rough indication of homotopy invariant fixed point information, while the later is more sophisticated and geometrical in nature. \cite{Jiang83lectures}.

On the other hand, digital topology deals with the questions of how and to what extent that topological concepts can meaningfully and usefully be applied to a binary image \cite{Kongetal92guest}. It is mainly concerned with studying mathematical properties of $n$-dimentional digital images \cite{Rosen79digital}. This study was initiated in the early 1970s by Azriel Rosenfeld \cite{Rosen70connectivity} (see also \cite{Rosen79digital}) and Mylopoulos and Pavlidis \cite{MyloPav71topological}. It has since provided the theoretical foundations for important image processing operations such as object counting, image thinning, image segmentation, boundary detection, contour filling, computer graphics and mathematical morphology etc (see. \cite{Bertrand94simple, Han05nonproduct, KongRosen96topological}).

A digital curve can be described as a sequence of digital points, or equivalently as a path of vertices on a graph \cite{Chen04discrete, Herman98geometry}. In general, we can define a digital surface based on direct adjacency and indirect adjacency \cite{Chen14digitalndiscrete}. The concept of digital surfaces was proposed by Artzy \textit{et. al.} \cite{Artzyetal81theory}, where they defined it as the face of some solid object. In 1981, Morgenthaler and Rosenfeld gave a different definition of digital surfaces \cite{MorgenRosen81surfaces}. They stated that a digital surface locally splits a neighborhood into two disconnected components. They also gave some classification results, which later Kong and Roscoe \cite{KongRoscoe85continuous} investigated further and concluded that most of those do not exist in terms of real world examples. This motivated Chen and Zhang \cite{Chen04discrete} to give another definition mainly for (6,26)-surfaces, called parallel-move based surfaces. They also obtained and proved the digital surface classification theorem \cite{Chen04discrete} (see also \cite{Chen14digitalndiscrete}). This inspired Chen and Rong \cite{ChenRong10digital} to calculate the genus and homology groups of 3-dimensional digital objects with the help of the classical Gauss-Bonnet Theorem and the Alexander Duality respectively.

An $n$-dimensional manifold is a topological space where each point has a neighborhood that is homeomorphic to an $n$-dimensional Euclidean space. In 1993, Chen and Zhang proposed a simple extension of digital surfaces to define a digital $n$-manifold \cite{ChenZhang93digital}. Melin \cite{Melin03connectedness} also studied digital $n$-manifolds using Khalimsky topological approach. Finding the orientability of digital manifolds is very significant in topology, as it is used to determine if a manifold contains a Mobius band. The digital Mobius band was first discovered by Lee and Rosenfeld \cite{KletteRosen04digital}. Afterwards, Chen \cite{Chen04discrete} designed an algorithm for determining whether a digital surface is orientable or not.

Until late 1980's, all works in digital topology were based on a graph-theoretic approach rather than topological, in which binary images are made into graphs by imposing adjacency relations on $\mathbb{Z}^n$. For $2$-dimensional binary images, the most frequently used adjacency relation is the (8,4) adjacency relation. The major problem of the graph-based approach to digital topology is that of determining what adjacency relations on $\mathbb{Z}^n$ might reasonably be used. One would normally want to use adjacency relations such that fundamental topological properties of $\mathbb{Z}^n$ have natural analogues for the graphs obtained from binary images. In \cite{Kongetal92concepts} Kong \textit{et al.} addressed this problem for $\mathbb{Z}^2$ and $\mathbb{Z}^3$. See \cite{Kongetal92guest} and references therein, for more details.

In \cite{BoxerSta19fixed}, the authors examined some properties of the fixed point set of a digitally continuous function. They believed that digital setting requires new methods that are not analogous to those of classical topological fixed point theory, and hence obtained results that often differ greatly from standard results in classical topology. They introduced some topological invariants related to fixed points for continuous self-maps on digital images, and study their properties. Their main contribution is the fixed point spectrum $F(X)$ of a digital image. i.e. the set of all numbers that can appear as the number of fixed points for some continuous self-map.

Motivated by the work of Boxer and Staecker in \cite{BoxerSta19fixed}, and the fact that coincidence theory has been greatly influenced by fixed point theory, in this manuscript we will investigate some properties of the coincidence point set of digitally continuous maps. We also introduce and study some topological invariants related to coincidence point sets and common fixed point sets for continuous maps on a digital image. Moreover, we study how these coincidence point sets are affected by rigidity and deformation retraction. Further, since the Rosenfeld graphical approach we intend to follow, seems more combinatorial than topological, we similarly expect to often achieve results that were not necessarily analogous to the classical topological coincidence point theory.

The organization of the paper is as follows: Section 1 houses an introduction to this research direction. In Section 2, we reviewed some basic and background material needed for this study. We introduce coincidence point spectrum and present some of its properties with some examples in Section 3. In Section 4, we introduce common fixed point spectrum, highlight some of its properties and present some illustrative examples. Section 5 studies how retractions interact with the coincidence and common fixed point spectra. In Section 6, we introduce and study the divergence degree obtained from the complement of the coincidence point set. Finally, in Section 7 we state our concluding remarks.

\section{Preliminaries}

Let $\mathbb{N}$ and $\mathbb{Z}$ denote the sets of natural numbers and integers respectively. Let us also denote by $\#X$ the number of elements (i.e. the cardinality) of a set $X.$ From now on, we denote by $id$ and $c$, the identity map (i.e. $id(x) = x$ for all $x \in X$) and the constant map (i.e. $c(x) = x_0$ for all $x \in X$ with $x_0 \in X$ fixed) respectively.

Traditionally, a digital image is a pair $(X, \kappa),$ where $X \subset \mathbb{Z}^n$ for some $n \in \mathbb{N}$ and $\kappa$ is an adjacency relation on $X$, which is symmetric and antireflexive. Therefore, we may view a digital image $(X, \kappa)$ as a graph for which $X$ is the vertex set and $\kappa$ determines the edge set. Usually, $X$ is finite and the adjacency relation reflects some type of ``closeness" of the adjacent points in $\mathbb{Z}^n$. When these usual conditions hold, one may consider the digital image as a model of a black and white real world digital image in which the black points (i.e. foreground) are represented by the members of $X$ and the white points (i.e. background) by the members of complement of $X$ (i.e. $\mathbb{Z}^n \backslash X)$ \cite{BoxerSta19fixed}.

We write $x \leftrightarrow_{\kappa} y$ to indicate that $x$ and $y$ are $\kappa$-adjacent or $x \leftrightarrow y$ whenever $\kappa$ is understood or it is unnecessary to mention. Further, we use the notation $x \Leftrightarrow_{\kappa} y,$ to indicate that $x$ and $y$ are $\kappa$-adjacent or are equal and use $x \Leftrightarrow y$ whenever $\kappa$ is understood.

In this paper, we will use the following type of adjacency. For $t \in \mathbb{N}$ with $1 \leq t \leq n$, any 2 (two) points $p = (p_1, p_2, \ldots , p_n)$ and $q = (q_1, q_2, \ldots , q_n)$ in $\mathbb{Z}^n$ (with $p \not= q$) are said to be $\kappa(t, n)$ or $\kappa$-adjacent if at most $t$ of their coordinates differs by $\pm 1,$ and all others coincide. Note that, the number of points adjacent to any element of $\mathbb{Z}^n$ which we represent by the $\kappa(t, n)$-adjacency relation on $\mathbb{Z}^n$ is determined by the number $t \in \mathbb{N}$ and can be obtain by the following formula, which appears in \cite{Han05nonproduct}:
\[\kappa := \kappa (t, n) = \sum_{i=1}^t 2^i C_i^n, \hspace*{0.5cm} \mbox{ where } C_i^n = \frac{n!}{(n-i)! \, i!}.\]

Following the graph theoretic approach of studying $n$-dimensional digital images, we will use the notions of $\kappa$-adjacency relations on $\mathbb{Z}^n$ and a digital $\kappa$-neighborhood as have been extensively used in the literature. More precisely, using the $\kappa$-adjacency relations as defined above, we say that a digital $\kappa$-neighborhood of a point $p$ in $\mathbb{Z}^n$ is the set defined and denoted as \cite{Rosen79digital}:
\[N_{\kappa}(p) := \{q \, \mid \, q \Leftrightarrow_{\kappa} p\}.\]
Also, the following notation is often use to denote a kind of neighborhood, the so called deleted digital $\kappa$-neighborhood of a point $p$ in $\mathbb{Z}^n$ \cite{KongRosen96topological}.
\[N^{\ast}_{\kappa}(p) := N_{\kappa}(p) \, \backslash \, \{p\}.\]

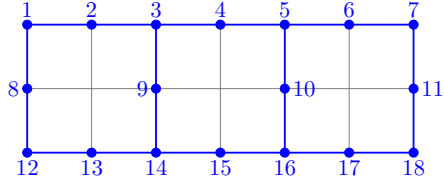
\begin{figure} \label{fig1}
\centering
\resizebox{6cm}{2.5cm}{%
\begin{tikzpicture}
	\draw[step=1cm,gray,very thin] (0,0) grid (6,2);
    \draw [blue, thick] (0,0) -- (0,2);
    \draw [blue, thick] (0,0) -- (6,0);
    \draw [blue, thick] (0,2) -- (6,2);
    \draw [blue, thick] (4,0) -- (4,2);
    \draw [blue, thick] (2,0) -- (2,2);
    \draw [blue, thick] (6,0) -- (6,2);
    \filldraw[blue] (0,0) circle (2pt) node[below]{12};
    \filldraw[blue] (0,1) circle (2pt) node[left]{8};
    \filldraw[blue] (0,2) circle (2pt) node[above]{1};
    \filldraw[blue] (1,0) circle (2pt) node[below]{13};
    \filldraw[blue] (2,0) circle (2pt) node[below]{14};
    \filldraw[blue] (3,0) circle (2pt) node[below]{15};
    \filldraw[blue] (4,0) circle (2pt) node[below]{16};
    \filldraw[blue] (5,0) circle (2pt) node[below]{17};
    \filldraw[blue] (6,0) circle (2pt) node[below]{18};
    \filldraw[blue] (1,2) circle (2pt) node[above]{2};
    \filldraw[blue] (2,2) circle (2pt) node[above]{3};
    \filldraw[blue] (3,2) circle (2pt) node[above]{4};
    \filldraw[blue] (4,2) circle (2pt) node[above]{5};
    \filldraw[blue] (5,2) circle (2pt) node[above]{6};
    \filldraw[blue] (6,2) circle (2pt) node[above]{7};
    \filldraw[blue] (2,1) circle (2pt) node[left]{9};
    \filldraw[blue] (4,1) circle (2pt) node[right]{10};
    \filldraw[blue] (6,1) circle (2pt) node[right]{11};
\end{tikzpicture}}
\caption{A 2-dimensional digital image with a 4-adjacency relation.}
\end{figure}

For $a, b \in \mathbb{Z}$ with $a \lneq b,$ the set $[a, b]_{\mathbb{Z}} = \{n \in \mathbb{Z} \, | \, a \leq n \leq b\}$ with $2$-adjacency relation is called a ``digital interval" \cite{Boxer94digitally}. We say that two subsets $(A, \kappa)$ and $(B, \kappa)$ of $(X, \kappa)$ are ``$\kappa$-adjacent" to each other if $A \cap B = \emptyset$ and there are points $a \in A$ and $b \in B$ such that $a$ and $b$ are $\kappa$-adjacent to each other. A set $X \subset \mathbb{Z}^n$ is called ``$\kappa$-connected" if it is not a union of two disjoint non-empty sets that are not $\kappa$-adjacent to each other \cite{Herman93oriented}. For a digital image $(X, \kappa)$ the ``$\kappa$-component" of $x \in X$ is defined to be the largest $\kappa$-connected subset of $(X, \kappa)$ containing the point $x$. 

\begin{definition} \cite{Rosen86continuous}
Let $(X, \kappa_1)$ and $(Y, \kappa_2)$ be digital images. A function $f : X \longrightarrow Y$ is $(\kappa_1, \kappa_2)$-continuous, if for every $\kappa_1$-connected subset $A$ of $X, f(A)$ is a $\kappa_2$-connected subset of $Y.$
\end{definition}

The function $f$ is called digitally continuous whenever $\kappa_1$ and $\kappa_2$ are understood. If $(X, \kappa_1) = (Y, \kappa_2)$ (i.e. $X=Y$ and $\kappa_1 = \kappa_2 = \kappa$) we say that a function is $\kappa$-continuous to abbreviate $(\kappa, \kappa)$-continuous.

\begin{theorem} \cite{Boxer99classical}
A function $f : X \longrightarrow Y$ between digital images $(X, \kappa_1)$ and $(Y, \kappa_2)$ is $(\kappa_1, \kappa_2)$-continuous if and only if for every $x, y \in X, f(x) \Leftrightarrow_{\kappa_2} f(y)$ whenever $x \leftrightarrow_{\kappa_1} y$.
\end{theorem}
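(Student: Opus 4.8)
The plan is to prove the two implications separately, since the statement is a biconditional.

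For the forward direction, I would assume $f$ is $(\kappa_1, \kappa_2)$-continuous and fix $x, y \in X$ with $x \leftrightarrow_{\kappa_1} y$. The key observation is that the two-point set $A = \{x, y\}$ is $\kappa_1$-connected, because $x$ and $y$ are $\kappa_1$-adjacent and hence $A$ cannot be written as a union of two disjoint non-empty sets that fail to be $\kappa_1$-adjacent. Applying the definition of continuity, $f(A) = \{f(x), f(y)\}$ must be $\kappa_2$-connected. But a set of at most two points is $\kappa_2$-connected precisely when its elements are either equal or $\kappa_2$-adjacent, which is exactly the assertion $f(x) \Leftrightarrow_{\kappa_2} f(y)$.

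For the reverse direction, I would assume the pointwise adjacency condition and argue by contradiction. Let $A$ be any $\kappa_1$-connected subset of $X$ and suppose $f(A)$ were not $\kappa_2$-connected. Then $f(A)$ decomposes as a union of two disjoint non-empty subsets $B_1, B_2$ that are not $\kappa_2$-adjacent to each other. Pulling these back along $f$, set $A_i = \{a \in A : f(a) \in B_i\}$ for $i = 1, 2$. These are non-empty (because $B_1, B_2$ lie in the image), disjoint (because $B_1 \cap B_2 = \emptyset$), and satisfy $A_1 \cup A_2 = A$. The crux is to show $A_1$ and $A_2$ are not $\kappa_1$-adjacent: if some $a_1 \in A_1$ were $\kappa_1$-adjacent to some $a_2 \in A_2$, the hypothesis would give $f(a_1) \Leftrightarrow_{\kappa_2} f(a_2)$; since $f(a_1) \in B_1$ and $f(a_2) \in B_2$ are distinct, this forces $f(a_1) \leftrightarrow_{\kappa_2} f(a_2)$, contradicting that $B_1$ and $B_2$ are not $\kappa_2$-adjacent. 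Thus $A$ splits into two disjoint non-empty non-$\kappa_1$-adjacent pieces, contradicting its $\kappa_1$-connectedness.

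The only delicate point, and the step I would watch most carefully, is correctly handling the relation $\Leftrightarrow_{\kappa_2}$ (``adjacent or equal'') in the reverse direction: one must invoke the disjointness $B_1 \cap B_2 = \emptyset$ to rule out the ``equal'' case $f(a_1) = f(a_2)$, so that genuine $\kappa_2$-adjacency between $B_1$ and $B_2$ is actually produced. The forward direction is essentially immediate once one notes that an edge $\{x, y\}$ is the simplest connected set, so the real content lies in unwinding the definition of $\kappa_2$-connectedness (phrased as ``not a union of two separated pieces'') through the contrapositive as above.
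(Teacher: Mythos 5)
Your proof is correct: both directions are handled properly, and you rightly isolate the one delicate point, namely using the disjointness of $B_1$ and $B_2$ to upgrade $f(a_1) \Leftrightarrow_{\kappa_2} f(a_2)$ to genuine $\kappa_2$-adjacency in the reverse direction. Note that the paper itself gives no proof of this statement (it is quoted from Boxer's work), but your argument is essentially the standard proof of that characterization: the two-point connected set $\{x,y\}$ for the forward implication, and the separation-pullback contradiction for the converse.
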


\begin{theorem} \cite{Boxer99classical} \label{th2}
Let $f : X \longrightarrow Y$ and $g : Y \longrightarrow Z$ be continuous functions between digital images $(X, \kappa_1), (Y, \kappa_2)$ and $(Z, \kappa_3)$. Then $g \circ f : (X, \kappa_1) \longrightarrow (Z, \kappa_3)$ is continuous.
\end{theorem}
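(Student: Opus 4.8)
The plan is to verify continuity of $g \circ f$ directly from the definition of digital continuity, which is the most economical route. Recall that a function between digital images is $(\kappa_1,\kappa_2)$-continuous precisely when it carries $\kappa_1$-connected subsets to $\kappa_2$-connected subsets. So I would begin by fixing an arbitrary $\kappa_1$-connected subset $A \subseteq X$ and aim to show that $(g \circ f)(A)$ is $\kappa_3$-connected.

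Next I would apply the two hypotheses in sequence. Since $f : (X,\kappa_1) \longrightarrow (Y,\kappa_2)$ is continuous, $f(A)$ is a $\kappa_2$-connected subset of $Y$. Since $g : (Y,\kappa_2) \longrightarrow (Z,\kappa_3)$ is continuous and $f(A)$ is $\kappa_2$-connected, it follows that $g(f(A))$ is a $\kappa_3$-connected subset of $Z$. Finally, using the set-theoretic identity $(g \circ f)(A) = g(f(A))$, I would conclude that $(g \circ f)(A)$ is $\kappa_3$-connected. As $A$ was an arbitrary $\kappa_1$-connected subset of $X$, the definition then yields that $g \circ f$ is $(\kappa_1,\kappa_3)$-continuous.

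Alternatively, one could argue through the pointwise adjacency characterization established just above: given $x \leftrightarrow_{\kappa_1} y$, continuity of $f$ yields $f(x) \Leftrightarrow_{\kappa_2} f(y)$, after which a short case split is required. If $f(x) = f(y)$ then $(g\circ f)(x) = (g\circ f)(y)$; if instead $f(x) \leftrightarrow_{\kappa_2} f(y)$, then continuity of $g$ gives $g(f(x)) \Leftrightarrow_{\kappa_3} g(f(y))$. In either case $(g\circ f)(x) \Leftrightarrow_{\kappa_3} (g\circ f)(y)$, so the characterization again delivers continuity. I expect no genuine obstacle: the only point demanding care is that the relation $\Leftrightarrow$ permits equality, so the characterization-based proof must handle the equal and the strictly-adjacent cases separately rather than invoking the continuity of $g$ unconditionally. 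The connectedness-based argument sidesteps this bookkeeping entirely, which is why I would favor it as the primary proof.
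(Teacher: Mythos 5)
Your proposal is correct. Note that the paper does not prove this statement at all: it is imported from \cite{Boxer99classical} as a known result, so there is no internal proof to compare against. Your primary argument (fix a $\kappa_1$-connected $A$, push it through $f$ and then $g$, and use $(g\circ f)(A)=g(f(A))$) is the standard one-line proof from the connectedness definition, and your alternative via the adjacency characterization is also sound --- in particular you are right that the case $f(x)=f(y)$ must be split off, since the characterization's hypothesis requires strict adjacency $\leftrightarrow_{\kappa_2}$ rather than $\Leftrightarrow_{\kappa_2}$.
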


\begin{definition} \cite{Khalimsky87motion}
A digital $\kappa$-path in a digital image $(X, \kappa)$ is a $(2, \kappa)$-continuous function $\gamma : \lbrack 0, m \rbrack_{\mathbb{Z}} \longrightarrow X$. Further, $\gamma$ is called a digital $\kappa$-loop if $\gamma(0) = \gamma(m),$ and the point $p = \gamma(0)$ is the base point of the loop $\gamma.$ Moreover, $\gamma$ is called a trivial loop if $\gamma$ is a constant function.
\end{definition}

For a digital image $(X, \kappa),$ we define
\[C(X, \kappa) = \{f : X \longrightarrow X \, | \, f \mbox{ is $\kappa$-continuous}\}.\]

Recall that, a topological space $X$ has the fixed point property (FPP, for short) if every continuous function $f : X \longrightarrow X$ has a fixed point. A similar definition has appeared in digital topology as follows:
\begin{definition} \cite{Rosen86continuous}
A digital image $(X, \kappa)$ has the fixed point property (FPP) if every $\kappa$-continuous $f : X \longrightarrow X$ has a fixed point.
\end{definition}

\begin{figure}    
\centering
\begin{tikzpicture}
	\draw[step=1cm,gray,very thin] (0,0) grid (6,2);
    \draw [blue, thick] (0,0) -- (0,2);
    \draw [blue, thick] (0,0) -- (6,0);
    \draw [blue, thick] (0,2) -- (6,2);
    \draw [blue, thick] (4,0) -- (4,2);
    \draw [blue, thick] (2,0) -- (2,2);
    \draw [blue, thick] (6,0) -- (6,2);
    \draw [red, thick] (0,1) -- (1,2) -- (2,1) -- (3,2) -- (4,1) -- (5,2) -- (6,1) -- (5,0) -- (4,1) -- (3,0) -- (2,1) -- (1,0) -- (0,1);
    \filldraw[blue] (0,0) circle (2pt) node[]{};
    \filldraw[blue] (0,1) circle (2pt) node[]{};
    \filldraw[blue] (0,2) circle (2pt) node[]{};
    \filldraw[blue] (1,0) circle (2pt) node[]{};
    \filldraw[blue] (2,0) circle (2pt) node[]{};
    \filldraw[blue] (3,0) circle (2pt) node[]{};
    \filldraw[blue] (4,0) circle (2pt) node[]{};
    \filldraw[blue] (5,0) circle (2pt) node[]{};
    \filldraw[blue] (6,0) circle (2pt) node[]{};
    \filldraw[blue] (1,2) circle (2pt) node[]{};
    \filldraw[blue] (2,2) circle (2pt) node[]{};
    \filldraw[blue] (3,2) circle (2pt) node[]{};
    \filldraw[blue] (4,2) circle (2pt) node[]{};
    \filldraw[blue] (5,2) circle (2pt) node[]{};
    \filldraw[blue] (6,2) circle (2pt) node[]{};
    \filldraw[blue] (2,1) circle (2pt) node[]{};
    \filldraw[blue] (4,1) circle (2pt) node[]{};
    \filldraw[blue] (6,1) circle (2pt) node[]{};
\end{tikzpicture}
\caption{A 2-dimensional digital image with an 8-adjacency relation.}
\end{figure}

However, this property turns out to be very trivial, since the only digital image with the fixed point property (FPP) is a single point as was established in \cite{Boxeretal16digital} as follows:

\begin{theorem} 
A digital image $(X, \kappa)$ has the FPP if and only if $\#X = 1.$
\end{theorem}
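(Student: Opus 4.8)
The plan is to prove the two implications separately, with the reverse direction being immediate and the forward direction handled by contraposition. For the easy direction, if $\#X = 1$, say $X = \{x_0\}$, then the only self-map of $X$ is the identity $id$, which fixes $x_0$; hence $(X,\kappa)$ trivially has the FPP. The substance of the statement is therefore the forward implication, which I would prove in contrapositive form: assuming $\#X \geq 2$, I will exhibit a single $\kappa$-continuous self-map $f : X \longrightarrow X$ with no fixed point, thereby showing that $(X,\kappa)$ fails the FPP.

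For the construction I would first select a convenient pair of distinct points. Observe that when $\#X \geq 2$ there always exist $a, b \in X$ with $a \neq b$ such that either $a \leftrightarrow_{\kappa} b$, or $a$ has no $\kappa$-neighbor at all: indeed, if some vertex of $X$ is $\kappa$-isolated take it to be $a$ and let $b$ be any other point, while if no vertex is isolated then every point has a neighbor, so any $a$ together with one of its neighbors $b$ will do. Having fixed such $a$ and $b$, I define the ``collapse-and-shift'' map
\[
f(x) = \begin{cases} b & \text{if } x = a, \\ a & \text{if } x \neq a. \end{cases}
\]

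The key step is to verify that $f$ is $\kappa$-continuous, for which I would invoke the adjacency characterization of continuity recalled above: it suffices to check that $x \leftrightarrow_{\kappa} y$ implies $f(x) \Leftrightarrow_{\kappa} f(y)$. If neither $x$ nor $y$ equals $a$, then $f(x) = a = f(y)$ and the requirement holds by equality. The only remaining case is $x = a \leftrightarrow_{\kappa} y$ with $y \neq a$, where $f(a) = b$ and $f(y) = a$, so I must confirm $b \Leftrightarrow_{\kappa} a$; but this case can only arise when $a$ actually has a neighbor, and in that situation our choice of the pair guarantees $a \leftrightarrow_{\kappa} b$, giving exactly $b \Leftrightarrow_{\kappa} a$. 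Hence $f$ is continuous. Finally, $f$ has no fixed point: $f(a) = b \neq a$ since $a \neq b$, and $f(x) = a \neq x$ for every $x \neq a$.

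Putting the pieces together, $\#X \geq 2$ forces the existence of a fixed-point-free $f \in C(X,\kappa)$, so $(X,\kappa)$ does not have the FPP; contrapositively, the FPP implies $\#X \leq 1$, and combined with the (implicit) nonemptiness of $X$ this yields $\#X = 1$. I expect the only genuinely delicate point to be the continuity verification in the edge case, namely ensuring that $b$ is chosen adjacent to $a$ whenever $a$ is non-isolated; the case split on whether $X$ possesses an edge is precisely what makes the single uniform formula for $f$ go through in both the connected and disconnected settings.
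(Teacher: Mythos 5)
Your argument is correct and complete. One thing to note: the paper offers no proof of this theorem at all --- it is quoted from \cite{Boxeretal16digital} --- so there is no in-paper argument to compare yours against. Your ``collapse-and-shift'' map is essentially the standard construction for this result: pick adjacent points $a \leftrightarrow_{\kappa} b$, send $a$ to $b$, and send everything else to $a$; continuity follows from the adjacency criterion because every adjacent pair of points maps either to the single point $a$ or to the pair $(b,a)$ with $b \Leftrightarrow_{\kappa} a$, and no point is fixed. Your extra case split on whether $X$ has a $\kappa$-isolated vertex is a worthwhile addition: the paper's definition of a digital image in Section 2 does not require connectedness (that standing assumption is only imposed later, in Section 3), and for an image containing an isolated vertex the naive choice of an adjacent pair starting at that vertex is impossible, while your choice (isolated $a$, arbitrary $b$) keeps the same formula continuous because the problematic case $a \leftrightarrow_{\kappa} y$ simply never arises. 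It is also worth observing that the more obvious candidate map, the transposition swapping $a$ and $b$ while fixing all other points, is not in general continuous (a point $y$ adjacent to $a$ but not adjacent or equal to $b$ violates the criterion), so the asymmetric map you chose is the right one.
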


\begin{definition} \cite{Boxer94digitally}
A function $f : X \longrightarrow Y$ between digital images $(X, \kappa_1)$ and $(Y, \kappa_2)$ is called an isomorphism if $f$ is a digitally continuous bijection such that $f^{-1}$ is digitally continuous.
\end{definition}

\begin{definition} \cite{Boxer99classical}
Let $(X, \kappa_1)$ and $(Y, \kappa_2)$ be digital images. Suppose that $f, g : X \longrightarrow Y$ are $(\kappa_1, \kappa_2)$-continuous functions, there is a positive integer $m$ and a function $H : X \times \lbrack 0, m \rbrack_{\mathbb{Z}} \longrightarrow Y$ such that:
\begin{enumerate} [label=\arabic*. ]
     \item For all $x \in X, H(x, 0) = f(x)$ and $H(x, m) = g(x)$;
     \item For all $x \in X,$ the induced function $H_x : \lbrack 0, m \rbrack_{\mathbb{Z}} \longrightarrow Y$ defined by 
     \[H_x(t) = H(x, t), \mbox{ for all } t \in \lbrack 0, m \rbrack_{\mathbb{Z}} \]
     is $(c_1, \kappa_1)$-continuous. That is, $H_x(t)$ is a $\kappa$-path in $Y$;
     \item For all $t \in \lbrack 0, m \rbrack_{\mathbb{Z}},$ the induced function $H_t : X \longrightarrow Y$ defined by
     \[H_t(x) = H(x, t), \mbox{ for all } x \in X\]
     is $(\kappa_1, \kappa_2)$-continuous. 
\end{enumerate}
Then $H$ is a digital homotopy (or $\kappa$-homotopy) between $f$ and $g$. Thus, the functions $f$ and $g$ are said to be digitally homotopic (or $\kappa$-homotopic) and denoted by $f \simeq g.$
\end{definition}
Note that if $m = 1,$ then $f$ and $g$ are said to be $\kappa$-homotopic in one step.

\begin{definition} \cite{Khalimsky87motion}
A continuous function $f : X \longrightarrow Y$ is called digitally nullhomotopic in $Y$ if $f$ is digitally homotopic to a constant function $c$. Moreover, a digital image $(X, \kappa)$ is said to be digitally contractible (or $\kappa$-contractible) if its identity map $id$ is digitally nullhomotopic.
\end{definition}

\begin{definition} \cite{BoxerSta19fixed, Haarmann15homotopy}
A function $f : X \longrightarrow Y$ is called rigid if no continuous map is homotopic to $f$ except $f$ itself. Moreover, when the identity map $id : X \longrightarrow X$ is rigid, we say that $X$ is rigid.
\end{definition}

\section{Coincidence Point Spectrum}

In \cite{BoxerSta18remarks}, the authors gave a brief treatment of homotopy-invariant fixed point theory. Following suit, we will now give a more general view of their treatment by extending it to a more general concept namely; the homotopy-invariant coincidence point theory. Let us begin, by respectively defining the quantities $MF(f)$ and $XF(f)$ as the minimum number and maximum number of fixed points among all maps homotopic to $f$.

For a self-map $f : X \longrightarrow X,$ we always have
\[0 \leq MF(f) \leq XF(f) \leq \#X.\]

Any one of the above inequalities can be strict or equality depending on the situation or conditions at hand.

\begin{definition}\cite{BoxerSta19fixed}
Let $f : X \longrightarrow X$ be a mapping on $X$. Then
\begin{enumerate} [label=(\roman*) ]
     \item The homotopy fixed point spectrum of $f$ is defined as:
\[S(f) = \{\#\textup{Fix($g$)} \, | \, g \simeq f \} \subseteq \{0, 1,  \ldots ,\#X\};\]
     \item The fixed point spectrum of $X$ is defined as:
\[F(X) = \{\# \mbox{Fix}(f) \, | \, f : X \longrightarrow X \mbox{ is continuous}\}.\]
\end{enumerate}
\end{definition}

For simplicity, in the sequel we shall be using ``continuous" instead of ``$\kappa$-continuous". Let's also denote a digital image $(X, \kappa)$ as simply $X,$ since we will not be referencing the adjacency relation explicitly, and we will often refer to ``digital images" as simply ``images". Moreover, from now on, we will consider the functions $f_1, f_2 : X \longrightarrow Y$ to be continuous maps between connected digital images $X$ and $Y$, unless stated otherwise.

Now, let's consider the set $C(f_1, f_2),$ which we call the ``coincidence point set" of the maps $f_1$ and $f_2.$
\[C(f_1, f_2) := \{x \in X \, | \, f_1(x) = f_2(x)\}.\]

Whenever we deform $f_1$ and $f_2$, the size and shape of $C(f_1, f_2)$ may vary greatly. However, in topological coincidence theory we are not interested in any such \emph{inessential} changes. We rather tend to capture only those features which remain unchanged by arbitrary homotopies.

In this paper, we are more concerned about the size of the set $C(f_1, f_2)$, and one possible tool to measure the set $C(f_1, f_2)$ is ``the minimum number of coincidence points" (i.e. $MC(f_1, f_2)$) which we define as: 
\[MC(f_1, f_2) := \min \{\# C(g_1, g_2) \, | \, g_1 \simeq f_1 \mbox{ and } g_2 \simeq f_2\}.\]



\begin{theorem} \label{th1}
Let $X,Y$ be isomorphic digital images and $f_1, f_2 : X \longrightarrow X$ be continuous mappings. Then there are continuous mappings $g_1,g_2 : Y \longrightarrow Y$ such that $\# C(f_1, f_2) \, \textup{=} \, \# C(g_1, g_2).$
\end{theorem}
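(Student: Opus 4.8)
The plan is to use the isomorphism $\phi : X \longrightarrow Y$ to transport the maps $f_1, f_2$ to the image $Y$ via conjugation, and then show that the resulting maps have a coincidence point set in bijective correspondence with $C(f_1, f_2)$. Since $X$ and $Y$ are isomorphic, there exists a digitally continuous bijection $\phi : X \longrightarrow Y$ whose inverse $\phi^{-1} : Y \longrightarrow X$ is also digitally continuous. I would then define $g_i := \phi \circ f_i \circ \phi^{-1} : Y \longrightarrow Y$ for $i \in \{1,2\}$.

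First I would verify that each $g_i$ is continuous. This is immediate from Theorem \ref{th2} (composition of continuous maps is continuous), since $g_i$ is the composite of the three continuous maps $\phi^{-1}$, $f_i$, and $\phi$. Thus $g_1, g_2$ are genuine continuous self-maps of $Y$, as required.

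Next I would establish the equality of cardinalities by exhibiting an explicit bijection between $C(f_1,f_2)$ and $C(g_1,g_2)$. The natural candidate is the restriction of $\phi$ itself. Concretely, for any $y \in Y$ write $y = \phi(x)$ with $x = \phi^{-1}(y)$; then
\[
g_1(y) = \phi(f_1(\phi^{-1}(y))) = \phi(f_1(x)), \qquad g_2(y) = \phi(f_2(\phi^{-1}(y))) = \phi(f_2(x)).
\]
Since $\phi$ is a bijection (hence injective), $g_1(y) = g_2(y)$ holds if and only if $\phi(f_1(x)) = \phi(f_2(x))$, which in turn holds if and only if $f_1(x) = f_2(x)$, i.e. if and only if $x \in C(f_1,f_2)$. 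This shows that $\phi$ carries $C(f_1,f_2)$ onto $C(g_1,g_2)$, and because $\phi$ is a bijection its restriction to $C(f_1,f_2)$ is an injection onto $C(g_1,g_2)$; hence $\# C(f_1,f_2) = \# C(g_1,g_2)$.

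The argument is essentially routine once the conjugates are written down, so I do not anticipate a serious obstacle. The only point requiring mild care is the bookkeeping of the biconditional in the coincidence condition — one must use injectivity of $\phi$ for the forward cancellation and the plain definition of $g_i$ for the reverse direction — but this is a short verification rather than a genuine difficulty. The essential content is simply that an isomorphism of digital images preserves all combinatorial data, and coincidence point sets are such data.
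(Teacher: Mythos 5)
Your proposal is correct and follows essentially the same route as the paper's proof: conjugating $f_1, f_2$ by the isomorphism to obtain $g_i = \phi \circ f_i \circ \phi^{-1}$ and showing that $\phi$ restricts to a bijection between the two coincidence sets. If anything, your write-up is slightly tighter — you prove the biconditional in one pass where the paper argues two separate inclusions ($\Phi(A) \subseteq B$ and $\Phi^{-1}(B) \subseteq A$), and you explicitly check continuity of the $g_i$ via the composition theorem, which the paper leaves implicit.
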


\begin{proof}
Let $\Phi : X \longrightarrow Y$ be an isomorphism and $A = C(f_1, f_2).$ Since $\Phi$ is one-to-one, $\#\Phi(A) = \#A.$
Let $g_1, g_2 : Y \longrightarrow Y$ be defined by $g_i = \Phi \circ f_i \circ \Phi^{-1},$ for $i = 1, 2.$ Now, for an arbitrary $y_0 \in \Phi(A),$ let $x_0 = \Phi^{-1}(y_0).$ Then
\begin{equation}
  \begin{split}
      g_1(y_0) &= \Phi \circ f_1 \circ \Phi^{-1}(y_0)\\
               &= \Phi \circ f_1(x_0)\\
               &= \Phi \circ f_2(x_0)\\
               &= \Phi \circ f_2 \circ \Phi^{-1}(y_0)\\
               &= g_2(y_0).
  \end{split}
\end{equation}
Let $B = C(g_1, g_2)$, then it follows that
\[\Phi(A) \subseteq B,\]
hence \[\#A \leq \#B.\]
Similarly, let $y_1 \in B$ (arbitrary) and $x_1 = \Phi^{-1}(y_1).$ Then
\begin{equation}
  \begin{split}
      f_1(y_1) &= \Phi^{-1} \circ g_1 \circ \Phi(y_1)\\
               &= \Phi^{-1} \circ g_1(x_1)\\
               &= \Phi^{-1} \circ g_2(x_1)\\
               &= \Phi^{-1} \circ g_2 \circ \Phi(y_1)\\
               &= f_2(y_1).
  \end{split}
\end{equation}
It follows that \[\Phi^{-1}(B) \subseteq A.\]
Therefore \[\#B \leq \#A.\]
Thus \[\# C(f_1, f_2) \, \textup{=} \, \# C(g_1, g_2)\]
as required. \qed
\renewcommand{\qed}{}  
\end{proof}

In the next few paragraphs, we will recall some classical topological notions. Notably, the following theorem proves that any change in the coincidence set $C(f, g)$ that may be effected by deforming both $f$ and $g$ can also be effected by deforming just $f.$ However, this property might not necessarily be true in digital topological setting as we will discuss later.

\begin{theorem}\cite{Brooks72removing} \label{th3}
Let $f, g : X \longrightarrow Y$ be mappings of a topological space $X$ into a topological manifold $Y,$ and suppose that $f^{\prime}$ and $g^{\prime}$ are homotopic to $f$ and $g$ respectively. Then there is a map $f^{\prime \prime}$ homotopic to $f$, such that \[C(f^{\prime \prime}, g) = C(f^{\prime},g^{\prime}).\]
\end{theorem}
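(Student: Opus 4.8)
The plan is to recast the assertion as a statement about the diagonal $\Delta = \{(y,y) : y \in Y\} \subseteq Y \times Y$ and then to \emph{transfer} the prescribed homotopy of $g$ onto $f'$ while holding the coincidence locus rigid. Fix homotopies $F,G : X \times [0,1] \to Y$ with $F(\cdot,0)=f$, $F(\cdot,1)=f'$, $G(\cdot,0)=g$, $G(\cdot,1)=g'$, and write $\bar{G}(x,t)=G(x,1-t)$ for the reverse track, running from $g'$ back to $g$. Since for any maps $u,v : X \to Y$ we have $C(u,v) = (u,v)^{-1}(\Delta)$, where $(u,v) : X \to Y\times Y$, the goal becomes the construction of a homotopy $\Gamma : X\times[0,1] \to Y\times Y$, written $\Gamma(x,t)=(A(x,t),B(x,t))$, such that $\Gamma(\cdot,0)=(f',g')$, the second coordinate is the prescribed track $B(x,t)=\bar{G}(x,t)$ with $B(\cdot,1)=g$, and the preimage of the diagonal stays frozen: $\Gamma(x,t)\in\Delta \iff \Gamma(x,0)\in\Delta$ for all $t$. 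Granting such a $\Gamma$, I set $f''=A(\cdot,1)$; then $C(f'',g) = \Gamma(\cdot,1)^{-1}(\Delta) = \Gamma(\cdot,0)^{-1}(\Delta) = C(f',g')$, while $A$ is a homotopy from $f'$ to $f''$ and $f'\simeq f$ by hypothesis, so transitivity of $\simeq$ yields $f''\simeq f$.

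Constructing $\Gamma$ is where the manifold hypothesis enters. Because $Y$ is a manifold, $\Delta$ has a product (tubular) neighborhood in $Y\times Y$: there is an open $N\supseteq\Delta$ with a fibre structure over $\Delta\cong Y$ whose zero section is $\Delta$, furnishing for sufficiently close pairs $(y_1,y_2)$ a ``difference'' that vanishes exactly on $\Delta$, together with an inverse ``addition'' recovering the first point from the second point and the difference. Writing $C=C(f',g')$, continuity forces $f'(x)$ close to $g'(x)$ on a neighborhood of $C$, so the difference $d(x)$ of the pair $(f'(x),g'(x))$ is defined there; I transport it along the moving base point by letting $A(x,t)$ be the point whose difference from $B(x,t)=\bar{G}(x,t)$ equals $d(x)$. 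At $t=0$ this returns $f'(x)$, and at $t=1$ it is the point lying at difference $d(x)$ from $g(x)$, which coincides with $g(x)$ exactly when $d(x)=0$, i.e. exactly on $C$. In particular this pins down $A(x,t)=\bar{G}(x,t)$ for $x\in C$, forcing $f''(x)=g(x)$ there, consistently with $C(f'',g)=C$.

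It then remains to extend $A$ over all of $X$ so that the pair $(A(x,t),B(x,t))$ avoids $\Delta$ for every $x\notin C$ and every $t$, the behaviour being already prescribed on $(C\times[0,1])\cup(X\times\{0\})$. I would carry this out via the homotopy extension property together with a general-position argument allowing the one-parameter family $A(x,\cdot)$ to dodge the moving target $B(x,\cdot)=\bar{G}(x,\cdot)$ off the coincidence set. I expect this to be the main obstacle: arranging $A(x,t)\neq B(x,t)$ for all $x\notin C$ and all $t$, continuously in $x$, is precisely where the manifold structure on $Y$ is indispensable, since it is what both supplies the product neighborhood of $\Delta$ used above and provides the room (codimension and local contractibility) needed to steer clear of the moving diagonal. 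With this extension in hand, $f''=A(\cdot,1)$ is the required map.
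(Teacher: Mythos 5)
Your overall architecture (recasting coincidences as the preimage of the diagonal $\Delta\subseteq Y\times Y$, prescribing the second track $B=\bar G$, and freezing the diagonal preimage) is a reasonable framing, and your construction near $C=C(f',g')$ via a tubular neighborhood of $\Delta$ is workable in spirit; note that the paper itself offers no proof of this statement---it is imported from \cite{Brooks72removing}---so the benchmark is Brooks' original argument. The genuine gap is your final step. ``Homotopy extension property together with a general-position argument'' cannot close it: the theorem allows $X$ to be an \emph{arbitrary} topological space, so there is no CW or simplicial structure, no dimension bound, and hence no transversality or general position available at all (already $X=Y$ shows the codimension count needed to dodge the diagonal with a one-parameter family would fail even in the smooth CW case). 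Moreover, the pair $(X,C)$ need not be a cofibred pair, so HEP does not apply to $\bigl(X\times\{0\}\bigr)\cup\bigl(C\times[0,1]\bigr)\hookrightarrow X\times[0,1]$; and even where it does apply, HEP produces \emph{some} extension, not one satisfying the open constraint $A(x,t)\neq \bar G(x,t)$ for all $x\notin C$. Finally, the conclusion demands the coincidence set be \emph{exactly} preserved, whereas generic-perturbation arguments at best remove coincidences approximately---they cannot simultaneously freeze $C$ pointwise and guarantee no new coincidences appear elsewhere. A secondary gap: transporting the difference $d(x)$ along the moving base point $\bar G(x,t)$ tacitly requires a parallel transport in the normal (micro)bundle of the diagonal that preserves the zero section and its complement; for topological manifolds this needs the tangent microbundle plus Kister's theorem and a covering-homotopy argument, none of which you supply.

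The device that actually makes the theorem work, and which subsumes both your near-$C$ transport and the problematic off-$C$ extension, is to move $f'$ by a continuous family of \emph{ambient homeomorphisms} rather than to dodge pointwise. Since $Y$ is a manifold, it admits a slicing structure: an open $U\supseteq\Delta$ and a map $\Phi$ assigning to each $(y,y')\in U$ a homeomorphism $\Phi(y,y')$ of $Y$ carrying $y$ to $y'$, equal to the identity when $y'=y$, depending continuously on $(y,y')$, and isotopic to the identity through such homeomorphisms (inside a chart, push $y$ to $y'$ by a bump-function translation). Subdividing the track $t\mapsto G(x,t)$ finely enough that consecutive stages lie in $U$ and composing the resulting small moves yields homeomorphisms $\phi_x$ of $Y$, continuous in $x$, isotopic to the identity in a parametrized way, with $\phi_x(g'(x))=g(x)$. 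Setting $f''(x)=\phi_x(f'(x))$, the parametrized isotopies give $f''\simeq f'\simeq f$, and the \emph{injectivity} of each $\phi_x$ gives, exactly and globally, $f''(x)=g(x)\iff f'(x)=g'(x)$, i.e. $C(f'',g)=C(f',g')$---with no case division between $C$ and its complement, no transport of differences, and no general position. This is essentially Brooks' argument; the remaining technical point his paper handles, and which any complete write-up must address, is making the subdivision work for noncompact $X$, where no single mesh suffices uniformly.
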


The following result is a consequence of Theorem \ref{th3}.
\begin{corollary}\cite{Brooks72removing}\label{cor2}
If we deform only one of the two maps $f, g$ in Theorem \ref{th3} by a homotopy while leaving the other fixed. Then \[MC^{*}(f, g) = MC(f,g).\]
\end{corollary}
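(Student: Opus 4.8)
The plan is to establish the two inequalities $MC(f,g)\le MC^{*}(f,g)$ and $MC^{*}(f,g)\le MC(f,g)$ separately. First I need to fix the meaning of $MC^{*}(f,g)$: since the corollary speaks of deforming only one of the two maps while leaving the other fixed, I read
\[MC^{*}(f,g) := \min\{\,\#C(f',g) \mid f' \simeq f\,\},\]
that is, the minimum number of coincidence points obtainable by homotoping $f$ alone and keeping $g$ rigid. Under this reading the two quantities minimize over nested families of pairs of maps: the pairs $(f',g)$ admissible for $MC^{*}$ are precisely the pairs $(g_1,g_2)$ admissible for $MC$ in which $g_2=g$.

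The inequality $MC(f,g)\le MC^{*}(f,g)$ is then immediate from this nesting. Every pair $(f',g)$ with $f'\simeq f$ is in particular a pair with $f'\simeq f$ and $g\simeq g$, hence it competes in the minimization defining $MC(f,g)$; since a minimum taken over a larger family can only be smaller, we get $MC(f,g)\le \#C(f',g)$ for each admissible $f'$, and therefore $MC(f,g)\le MC^{*}(f,g)$.

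The reverse inequality is where Theorem \ref{th3} does the real work. I would select a pair $(f',g')$ realizing $MC(f,g)$, so that $f'\simeq f$, $g'\simeq g$, and $\#C(f',g')=MC(f,g)$. Applying Theorem \ref{th3} to the maps $f,g$ with these $f',g'$ as the homotopic data produces a map $f''\simeq f$ satisfying $C(f'',g)=C(f',g')$, and in particular $\#C(f'',g)=\#C(f',g')=MC(f,g)$. Because $f''\simeq f$, the pair $(f'',g)$ is admissible for $MC^{*}$, whence $MC^{*}(f,g)\le \#C(f'',g)=MC(f,g)$. Combining the two inequalities yields $MC^{*}(f,g)=MC(f,g)$.

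The argument carries no genuine computational obstacle, since Theorem \ref{th3} supplies the essential content: any coincidence set attainable by deforming both maps is already attainable by deforming $f$ alone against the original $g$. The only point demanding care is the bookkeeping of the two $\min$-inequalities together with the observation that the map appearing against $f''$ in the conclusion of Theorem \ref{th3} is the original, undeformed $g$ rather than $g'$. It is exactly this feature that makes $(f'',g)$ a legitimate competitor in the minimization defining $MC^{*}$, and hence that lets the classical Brooks result collapse the two-sided minimum onto the one-sided one.
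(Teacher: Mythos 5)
Your proposal is correct and is exactly the argument the paper has in mind: the paper states this corollary as an immediate consequence of Theorem \ref{th3} (citing Brooks) without spelling out details, and your two-inequality bookkeeping---the trivial nesting inequality $MC(f,g)\le MC^{*}(f,g)$ plus applying Theorem \ref{th3} to a minimizing pair $(f',g')$ to produce $f''\simeq f$ with $C(f'',g)=C(f',g')$---is the standard and intended derivation. No gaps; the key observation that the theorem's conclusion pairs $f''$ against the original $g$ is precisely what makes the argument close.
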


For example. Let $f$ be a self map of $X.$ Then
\[MC(f, \textup{id}) = \min \{\#\textup{Fix($g$)} \, | \, g \simeq f\} = MF(f).\]
i.e. the minimum number of coincidence points coincide with the classical minimum number of fixed points which plays a central role in the classical topological fixed point theory.

The statement in Theorem \ref{th3} only holds for continuous maps on manifolds (or slightly more general spaces than that). However, it does not hold even for continuous maps on polyhedra. One of the major limitations of Nielsen coincidence theory is that there is no way of dealing with homotopy-invariant coincidence counting where only one map varies by homotopy. When the space is a manifold it's no more a problem because of Brooks result (Theorem \ref{th3}), but even when the space is a polyhedron there is really no way to proceed. 


So, we believe that it would be interesting to investigate whether or not Theorem \ref{th3} and Corollary \ref{cor2} holds in the setting of digital spaces. A partial answer to this problem is given in Proposition \ref{prop}.

Now, for some maps $f_1,f_2 : X \longrightarrow Y,$ we may define the following set $HCS(f_1, f_2),$ which we call the ``homotopy coincidence point spectrum" of the functions $f_1$ and $f_2$ as follows:
\[HCS(f_1, f_2) = \{\# C(g_1, g_2) \, | \, g_1 \simeq f_1 \mbox{ and } g_2 \simeq f_2\} \subseteq \{0, 1,  \ldots ,\#X\}.\]

\begin{remark}
\begin{enumerate}[label=(\roman*) ] 
    \item $MC(f_1, f_2) = \min{HCS(f_1, f_2)};$
    \item Moreover, both $MC(f_1, f_2)$ and $HCS(f_1, f_2)$ are homotopy invariants for any continuous functions $f_1$ and $f_2$.
\end{enumerate}
\end{remark}

Now, we may also consider the ``coincidence point spectrum" of $X$, which we define and denote as:
\[CS(X) = \{\# C(f_1, f_2) \, | \, f_1, f_2 : X \longrightarrow Y \mbox{ are continuous}\}.\]

The following immediately follows as a consequences of Theorem \ref{th1} above.
\begin{corollary}
Let $X$ and $Y$ be isomorphic digital images. Then \[CS(X) = CS(Y).\]
\end{corollary}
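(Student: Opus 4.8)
The plan is to derive this corollary directly from Theorem \ref{th1} by proving the two set inclusions $CS(X) \subseteq CS(Y)$ and $CS(Y) \subseteq CS(X)$ separately, since a set equality between spectra is most cleanly handled by double inclusion.

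First I would establish $CS(X) \subseteq CS(Y)$. Let $n \in CS(X)$ be arbitrary; by definition there exist continuous self-maps $f_1, f_2 : X \longrightarrow X$ with $n = \# C(f_1, f_2)$. Since $X$ and $Y$ are isomorphic, Theorem \ref{th1} supplies continuous self-maps $g_1, g_2 : Y \longrightarrow Y$ satisfying $\# C(f_1, f_2) = \# C(g_1, g_2)$. Hence $n = \# C(g_1, g_2) \in CS(Y)$, and as $n$ was arbitrary we obtain $CS(X) \subseteq CS(Y)$.

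Next I would establish the reverse inclusion by symmetry. The key observation is that the isomorphism relation between digital images is symmetric: if $\Phi : X \longrightarrow Y$ is an isomorphism, then by definition $\Phi$ is a digitally continuous bijection whose inverse $\Phi^{-1}$ is also digitally continuous, so $\Phi^{-1} : Y \longrightarrow X$ is itself an isomorphism and $Y$ is isomorphic to $X$. Applying Theorem \ref{th1} with the roles of $X$ and $Y$ interchanged then yields $CS(Y) \subseteq CS(X)$ by exactly the same argument. Combining the two inclusions gives $CS(X) = CS(Y)$.

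The argument is essentially bookkeeping once Theorem \ref{th1} is in hand; the only point genuinely requiring attention is the symmetry of the isomorphism hypothesis, which is what licenses applying Theorem \ref{th1} in both directions, together with the observation that the maps produced by Theorem \ref{th1} are again \emph{self}-maps (of $Y$, respectively of $X$), so that their coincidence numbers lie in the appropriate spectra. I do not expect any substantive obstacle beyond these routine checks.
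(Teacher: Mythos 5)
Your proof is correct and follows essentially the same route as the paper, which states the corollary as an immediate consequence of Theorem \ref{th1} without writing out details. Your double-inclusion argument, using the symmetry of the isomorphism relation to apply Theorem \ref{th1} in both directions, is exactly the bookkeeping the paper leaves implicit.
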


To avoid confusion, when we allow only one of the two maps to be deformed by a homotopy while keeping the other map fixed, we let $MC^{*}(f_1, f_2),$ $HCS^{*}(f_1, f_2)$ and $CS^{*}(X)$ to denote $MC(f_1, f_2), HCS(f_1, f_2)$ and $CS(X)$ respectively. For instance, we have
\[MC^{*}(f_1, f_2) := \min \{\# C(g_1, f_2) \, | \, g_1 \simeq f_1 \mbox{ and } f_2 \mbox{ is fixed}\}.\]

\begin{theorem} 
Suppose that $X$ is a rigid digital image. Let $id$ and $c$ be the identity and constant mappings respectively. Then \[HCS(id, c) = S(c) \mbox{ and } MC(id, c) = MF(c).\]
\end{theorem}

\begin{proof}
The results follows immediately from Corollary \ref{cor2} and the fact that $X$ is rigid.
\renewcommand{\qed}{}  
\end{proof}

\begin{example}
Let $X$ be a rigid digital image and $f : X \longrightarrow X$ be a continuous mapping. Then \[HCS(f, id) = S(f).\]
\end{example}

\begin{example} \label{exp1}
Let $X$ be a connected digital image, $f : X \longrightarrow X$ be a continuous mapping and the constant mapping $c$ be rigid. Then \[HCS(f, c) =  \{1\}.\]
\end{example}

\begin{remark}
In Example \ref{exp1} above, we realise that the assumption that $c$ is rigid is very strong and therefore forced $X$ to be a single point, which makes the example a little bit not too interesting.
\end{remark}

\begin{figure}    
\centering
\resizebox{2.5cm}{3cm}{%
\begin{tikzpicture}
			\tikzstyle{subj} = [circle, minimum width=8pt, fill, inner sep=0pt]
			\tikzstyle{subja} = [circle, minimum width=8pt, fill, inner sep=0pt]
			\tikzstyle{subjb} = [circle, minimum width=8pt, fill, inner sep=0pt]
			\tikzstyle{subjc} = [circle, minimum width=8pt, fill, inner sep=0pt]
			\tikzstyle{subje} = [circle, minimum width=8pt, fill, inner sep=0pt]
			\node[blue, subj, label=above:{\textcolor{blue}{1}}] (max) at (0,4.75) {};
			\node[blue, subja, label=above:{\textcolor{blue}{2}}] (a) at (-2,2.5) {};
			\node[blue, subjb, label=above:{\textcolor{blue}{5}}] (b) at (2,2.5) {};
			\node[blue, subjc, label=below:{\textcolor{blue}{3}}] (c) at (-1.5,0) {};
			\node[blue, subje, label=below:{\textcolor{blue}{4}}] (e) at (1.5,0) {};
			\draw (max) -- (a) -- (c) -- (e) -- (b) -- (max); 
\end{tikzpicture}}
\caption{The digital image $C_5$.}
\label{plot:speedup}
\end{figure}
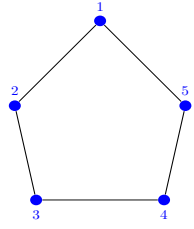

\begin{proposition} \label{prop}
If $X$ is a rigid image then Corollary \ref{cor2} holds.
\end{proposition}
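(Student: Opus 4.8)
The plan is to read Corollary~\ref{cor2} in the digital setting as the assertion $MC^{*}(f_1,f_2) = MC(f_1,f_2)$, and to prove it by comparing the two admissible families over which the minima are taken. First I would record the inequality that holds with no hypothesis at all: every pair $(g_1,f_2)$ with $g_1 \simeq f_1$ that is admissible for $MC^{*}$ is also admissible for $MC$ (take the second map to be $f_2 \simeq f_2$), so minimising over the larger family can only decrease the value, giving
\[MC(f_1,f_2) \leq MC^{*}(f_1,f_2).\]
Thus all the content lies in the reverse inequality, namely that the extra freedom of also deforming the second map cannot strictly lower the minimal number of coincidences.

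To obtain the reverse inequality I would use rigidity to show that the second map simply cannot be deformed, so that the two admissible families are in fact identical. Here I would first make explicit the one-step description of rigidity that is built into the definitions: a continuous self-map $h$ is $\kappa$-homotopic to $id$ in one step precisely when $h(x) \Leftrightarrow x$ for every $x \in X$, and since every homotopy is a finite concatenation of one-step homotopies, $X$ being rigid is equivalent to the statement that $h = id$ is the only continuous self-map satisfying $h(x) \Leftrightarrow x$ for all $x$. Granting that the map held fixed is rigid, any $g_2 \simeq f_2$ must equal $f_2$; the family defining $MC$ then collapses onto the family defining $MC^{*}$, so that not only do the minima agree but in fact $HCS(f_1,f_2) = HCS^{*}(f_1,f_2)$ as subsets of $\{0,1,\ldots,\#X\}$, and the equality $MC^{*}(f_1,f_2) = MC(f_1,f_2)$ follows at once.

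The step I expect to be the real obstacle is the hypothesis hidden in the previous sentence: rigidity of $X$ is, by definition, a statement about $id$ alone, and it does not by itself force the \emph{fixed} map to be rigid. This is harmless exactly when that map is the identity --- which is the situation appearing in the Theorem and Examples immediately before this Proposition, where $id$ is one of the two maps --- since there $g_2 \simeq id$ gives $g_2 = id$ straight from rigidity of $X$. For a general held-fixed self-map the collapse can fail even on a rigid image: if $\#X > 1$ and $X$ is connected, a constant map $c$ is one-step homotopic to the constant map at any adjacent vertex (their common values are $\kappa$-adjacent), so $c$ is never rigid and its homotopy class is not a singleton. I would therefore carry out the argument under the understanding that the map kept fixed is the identity (or, more generally, is itself rigid), and point out that it is precisely the trivialisation of the homotopy class of $id$ on a rigid image that makes deforming one map equivalent to deforming both.
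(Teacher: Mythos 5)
The paper states Proposition \ref{prop} with no proof at all, so there is no authorial argument to compare yours against; judged on its own, your analysis is correct and in fact sharper than the paper's unproved assertion. The trivial inequality $MC(f_1,f_2)\leq MC^{*}(f_1,f_2)$, your one-step characterization of rigidity (a continuous $h$ is one-step homotopic to $id$ iff $h(x)\Leftrightarrow x$ for all $x$, and induction along the stages of a homotopy reduces rigidity of $X$ to this condition), and the collapse of the two admissible families when the held-fixed map is itself rigid are all sound. The restricted statement you prove is also precisely what the paper actually uses: in the Theorem and Examples surrounding the Proposition, one of the two maps is $id$, whose homotopy class on a rigid image is a singleton, and the Proposition is evidently intended to justify exactly that maneuver.

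One refinement is worth making. You show only that the \emph{method} fails for a general held-fixed map (the homotopy class of $c$ is not a singleton), which by itself leaves open the possibility that the equality $MC^{*}=MC$ still holds for other reasons. It does not, and your constants observation upgrades to a genuine counterexample to the literal, universally quantified statement. Let $X$ be rigid, connected, with $\#X>1$ (such images exist; see \cite{BoxerSta19fixed}), and take $f_1=id$, $f_2=c=c_{x_0}$. Rigidity pins $g_1=id$, so $MC^{*}(id,c)=\#C(id,c)=\#\textup{Fix}(c)=1$. On the other hand, choose adjacent points $a\leftrightarrow b$ and define $g(a)=b$, $g(b)=a$, and $g(x)=a$ for all other $x$: every map with image in $\{a,b\}$ is automatically continuous and one-step homotopic to $c_b$, hence $g\simeq c$, and $g$ is fixed-point free, so $MC(id,c)\leq\#C(id,g)=\#\textup{Fix}(g)=0$. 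Thus $MC^{*}(id,c)=1\neq 0=MC(id,c)$ on a rigid image, so your restriction --- that the map kept fixed must itself be rigid, not merely that $X$ be rigid --- is a necessity rather than a convenience, and the Proposition as printed is false unless read, as you read it, in that restricted sense.
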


\begin{example} \label{exp2}
Let $C_5$ be the cycle of $5$ points, $id$ be the identity map, and $c$ be a constant map. Then \[\#C(id, c) = 1.\]

If we change $id$ by homotopy to some other map say $f$, we will always have $\#C(f, c) = 1$ since $f$ must be a rotation \cite{BoxerSta19fixed}. Therefore, the spectrum of coincidences when we change only the first map by homotopy is just the set $\{1\}.$ i.e. $HCS^{*}(id, c) = \{1\}$.

However, if we are allowed to change both maps by homotopy, then we can change $c$ to some other map say $g$ which has $0, 1, 2$ or $3$ fixed points \cite{BoxerSta19fixed}. Now, the spectrum of coincidences when we are allowed to change both maps by homotopy is $\{0,1,2,3\}.$ i.e. $HCS(id, c) = \{0,1,2,3\}.$

Moreover, in this particular example, when we interchange the position of the two mappings, we have $HCS^{*}(c, id) = HCS(c, id) = \{0,1,2,3\}$.

In fact, any cycle of $5$ or more points can hold a similar result to this example. In other words, there's nothing special about the cycle of $5$ points specifically. We choose the cycle of $5$ points, $id$ and $c$ to emphasize that $HCS^{*}(f, g)$ can be different from $HCS(f, g)$ for any continuous maps $f$ and $g$.
\end{example}

\begin{remark}
\begin{enumerate}[label=(\roman*) ] 
    \item Example \ref{exp2} above shows that Theorem \ref{th3} is false in the setting of digital spaces;
    \item The equality $HCS^{*}(f_1, f_2) = HCS^{*}(f_2, f_1)$ need not necessarily always be true for any mappings $f_1$ and $f_2$.
\end{enumerate}
\end{remark}

\section{Common Fixed Point Spectrum}

In this section, we present the concept of common fixed point set, some related invariants and results were also discuss. Let $f_1,f_2 : X \longrightarrow X,$ we define the ``common fixed point set" of $f_1$ and $f_2$ as:
\[CF(f_1, f_2) := \{x \in X \, | \, f_1(x) = f_2(x) = x\}.\]

\begin{theorem} \label{thm5}
Let $X,Y$ be isomorphic digital images and $f_1, f_2 : X \longrightarrow X$ be continuous mappings. Then there are continuous mappings $g_1,g_2 : Y \longrightarrow Y$ such that $\# CF(f_1, f_2) \, \textup{=} \, \# CF(g_1, g_2).$
\end{theorem}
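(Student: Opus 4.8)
The plan is to mimic the proof of Theorem \ref{th1} via conjugation by the isomorphism, while taking care that the extra fixed-point condition (not merely coincidence) is preserved. Let $\Phi : X \longrightarrow Y$ be an isomorphism and define $g_i = \Phi \circ f_i \circ \Phi^{-1}$ for $i = 1, 2$. By Theorem \ref{th2}, each $g_i$ is continuous, being a composition of continuous maps ($\Phi$, $f_i$, and $\Phi^{-1}$, the last of these continuous because $\Phi$ is an isomorphism).

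Next I would show that $\Phi$ carries $A := CF(f_1, f_2)$ into $B := CF(g_1, g_2)$. Take $x_0 \in A$, so that $f_1(x_0) = f_2(x_0) = x_0$, and set $y_0 = \Phi(x_0)$. A direct computation in the style of Theorem \ref{th1} gives $g_i(y_0) = \Phi \circ f_i \circ \Phi^{-1}(y_0) = \Phi(f_i(x_0)) = \Phi(x_0) = y_0$ for each $i$. The crucial new feature, absent from the coincidence case, is that the common value is exactly $\Phi(x_0) = y_0$, so $y_0$ is not merely a coincidence point of $g_1, g_2$ but a genuine common fixed point. Hence $\Phi(A) \subseteq B$, and since $\Phi$ is injective, $\#A \leq \#B$.

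For the reverse inclusion I would run the same argument with $\Phi^{-1}$ in place of $\Phi$, using the identity $f_i = \Phi^{-1} \circ g_i \circ \Phi$. Taking an arbitrary $y_1 \in B$ and setting $x_1 = \Phi^{-1}(y_1)$, one checks $f_i(x_1) = x_1$ for each $i$, so $x_1 \in A$; this yields $\Phi^{-1}(B) \subseteq A$ and therefore $\#B \leq \#A$. Combining the two inequalities gives $\# CF(f_1, f_2) = \# CF(g_1, g_2)$, as required. In fact $\Phi$ restricts to a bijection between $A$ and $B$, which is a slightly cleaner way to draw the conclusion.

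This argument is essentially routine, and I do not expect a genuine obstacle. The only point warranting attention, and the one that distinguishes this result from Theorem \ref{th1}, is verifying that conjugation by $\Phi$ preserves fixedness and not just the coincidence relation, i.e. that the shared value of $g_1$ and $g_2$ at $y_0$ equals $y_0$ itself. This follows immediately from $\Phi(x_0) = y_0$, so the stronger condition costs nothing beyond one extra equality in the computational chain.
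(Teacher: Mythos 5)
Your proposal is correct and is exactly the argument the paper intends: the paper's proof of this theorem simply says it ``follows from similar argument to the proof of Theorem \ref{th1}'', namely the conjugation $g_i = \Phi \circ f_i \circ \Phi^{-1}$, which you carry out in full. Your extra observation that the common value at $y_0$ equals $y_0$ itself (so fixedness, not just coincidence, is preserved) is precisely the one new step needed, and your restriction-to-a-bijection formulation even avoids a small notational slip ($f_1(y_1)$ where $f_1(x_1)$ is meant) present in the paper's proof of Theorem \ref{th1}.
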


\begin{proof}
The result follows from similar argument to the proof of Theorem \ref{th1}. \qed
\renewcommand{\qed}{}  
\end{proof}

Similar to the assertions in the previous section, we define the ``minimum number of common fixed points" of $f_1$ and $f_2$ as:
\[MCF(f_1, f_2) := \min \{\# CF(g_1, g_2) \, | \, g_1 \simeq f_1 \mbox{ and } g_2 \simeq f_2\}.\]

Moreover, for some maps $f_1,f_2 : X \longrightarrow X,$ we may consider the following set $HFS(f_1, f_2),$ which we call the ``homotopy common fixed point spectrum" of $f_1$ and $f_2$:
\[HFS(f_1, f_2) = \{\# CF(g_1, g_2) \, | \, g_1 \simeq f_1 \mbox{ and } g_2 \simeq f_2\} \subseteq \{0, 1,  \ldots ,\#X\}.\]

We may also consider the following ``common fixed point spectrum" of $X$ defined as:
\[CFS(X) = \{\# CF(f_1, f_2) \, | \, f_1, f_2 : X \longrightarrow X \mbox{ are continuous}\}.\]

The following immediately follows as a consequences of Corrollary \ref{thm5} above.
\begin{corollary}
Let $X$ and $Y$ be isomorphic digital images. Then \[CFS(X) = CFS(Y).\]
\end{corollary}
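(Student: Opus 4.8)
The plan is to prove the equality $CFS(X) = CFS(Y)$ by establishing the two inclusions $CFS(X) \subseteq CFS(Y)$ and $CFS(Y) \subseteq CFS(X)$ separately, each as an immediate application of Theorem \ref{thm5}. First I would unwind the definition: an integer $n$ lies in $CFS(X)$ precisely when $n = \#CF(f_1, f_2)$ for some continuous $f_1, f_2 : X \longrightarrow X$. This reduces the whole statement to matching common fixed point counts of self-maps across the isomorphism.

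For the inclusion $CFS(X) \subseteq CFS(Y)$, I would take an arbitrary $n \in CFS(X)$, write $n = \#CF(f_1, f_2)$ for suitable continuous $f_1, f_2 : X \longrightarrow X$, and then invoke Theorem \ref{thm5} directly: since $X$ and $Y$ are isomorphic, there exist continuous $g_1, g_2 : Y \longrightarrow Y$ with $\#CF(f_1, f_2) = \#CF(g_1, g_2)$. Hence $n = \#CF(g_1, g_2) \in CFS(Y)$, and as $n$ was arbitrary the inclusion follows.

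The step that requires a small observation, and which I expect to be the only genuine point to address, is the reverse inclusion $CFS(Y) \subseteq CFS(X)$. Here I would note that the relation of being isomorphic is symmetric: if $\Phi : X \longrightarrow Y$ is an isomorphism then $\Phi^{-1} : Y \longrightarrow X$ is a digitally continuous bijection whose inverse $\Phi$ is also digitally continuous, so $\Phi^{-1}$ is itself an isomorphism and $Y$ is isomorphic to $X$. With this symmetry in hand, I would apply Theorem \ref{thm5} again but with the roles of $X$ and $Y$ interchanged, obtaining for each $m = \#CF(h_1, h_2) \in CFS(Y)$ continuous maps on $X$ realizing the same count, so that $m \in CFS(X)$.

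Combining the two inclusions yields $CFS(X) = CFS(Y)$, as required. No delicate estimate or construction is needed beyond the symmetry remark; the real content has already been carried by the isomorphism-transport argument of Theorem \ref{thm5}, exactly paralleling the earlier corollary for the coincidence point spectrum $CS$.
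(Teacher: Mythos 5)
Your proof is correct and follows essentially the same route as the paper, which simply states the corollary as an immediate consequence of Theorem \ref{thm5}; your two-inclusion argument, including the observation that isomorphism is symmetric so Theorem \ref{thm5} can be applied with $X$ and $Y$ interchanged, is exactly the implicit reasoning the paper relies on.
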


\begin{remark} \label{rem1}
\begin{enumerate} [label=(\roman*) ]
     \item If $f_1 = f_2 = f$, then $CF(f_1, f_2) = \textup{Fix($f$)}$;
     \item It is easy to see that $F(X)$ is always a subset of $CFS(X)$.
\end{enumerate}
\end{remark}

By Remark \ref{rem1}, we obtain the following two corollaries.

\begin{corollary} \cite{BoxerSta19fixed}
Let $X,Y$ be isomorphic digital images and $f : X \longrightarrow X$ be continuous mapping. Then there exists a continuous mapping $g : Y \longrightarrow Y$ such that $\# \textup{Fix($f$)} \, \textup{=} \, \# \textup{Fix($g$)}.$
\end{corollary}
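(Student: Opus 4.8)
The plan is to deduce this directly from Theorem \ref{thm5} by collapsing the two maps into one, exactly as Remark \ref{rem1}(i) suggests. Given the single self-map $f : X \longrightarrow X$, I would set $f_1 = f_2 = f$ in Theorem \ref{thm5}. Remark \ref{rem1}(i) then identifies the common fixed point set with the ordinary fixed point set, giving $CF(f, f) = \textup{Fix}(f)$, so that $\# CF(f_1, f_2) = \# \textup{Fix}(f)$ on the $X$ side.

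Next I would apply Theorem \ref{thm5} to obtain continuous maps $g_1, g_2 : Y \longrightarrow Y$ with $\# CF(f_1, f_2) = \# CF(g_1, g_2)$. The one point that needs care is that Theorem \ref{thm5} as stated only guarantees \emph{some} pair $(g_1, g_2)$, whereas to reuse Remark \ref{rem1}(i) on the $Y$ side I need $g_1 = g_2$. This is where I would look inside the construction rather than treat the theorem as a black box: the proof of Theorem \ref{thm5} mirrors that of Theorem \ref{th1} and produces the conjugates $g_i = \Phi \circ f_i \circ \Phi^{-1}$ for an isomorphism $\Phi : X \longrightarrow Y$. With $f_1 = f_2 = f$ this forces $g_1 = g_2 = \Phi \circ f \circ \Phi^{-1} =: g$, a single continuous self-map of $Y$ by Theorem \ref{th2}.

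Having secured $g_1 = g_2 = g$, Remark \ref{rem1}(i) applies again, now on $Y$, giving $CF(g, g) = \textup{Fix}(g)$. Stringing the equalities together yields
\[\# \textup{Fix}(f) = \# CF(f, f) = \# CF(g, g) = \# \textup{Fix}(g),\]
which is the claim.

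The only genuine obstacle is the diagonal issue flagged above: a literal reading of Theorem \ref{thm5} does not assert $g_1 = g_2$, so the corollary is not quite a formal corollary of its \emph{statement}. I would resolve this either by appealing to the explicit conjugation in the proof, or --- perhaps more cleanly --- by giving a short self-contained argument: define $g = \Phi \circ f \circ \Phi^{-1}$ and verify that $\Phi$ restricts to a bijection $\textup{Fix}(f) \to \textup{Fix}(g)$, since $f(x) = x$ holds if and only if $g(\Phi(x)) = \Phi(x)$. Because $\Phi$ is injective this immediately gives $\# \textup{Fix}(f) = \# \textup{Fix}(g)$, bypassing Theorem \ref{thm5} entirely.
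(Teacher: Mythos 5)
Your proposal is correct and follows the same route the paper intends: the paper derives this corollary in one line from Theorem \ref{thm5} together with Remark \ref{rem1}(i), exactly as you do. Your further observation that Theorem \ref{thm5} as literally stated does not force $g_1 = g_2$, and your repair via the explicit conjugation $g = \Phi \circ f \circ \Phi^{-1}$ (under which $\Phi$ restricts to a bijection $\textup{Fix}(f) \to \textup{Fix}(g)$), is a legitimate tightening of the paper's unstated diagonal step rather than a different approach.
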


\begin{corollary} \cite{BoxerSta19fixed}
Let $X$ and $Y$ be isomorphic digital images. Then \[F(X) = F(Y).\]
\end{corollary}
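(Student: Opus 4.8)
The plan is to prove the set equality $F(X) = F(Y)$ by establishing the two inclusions $F(X) \subseteq F(Y)$ and $F(Y) \subseteq F(X)$ separately. The main tool is the corollary immediately preceding this statement, which guarantees that for every continuous self-map $f$ of $X$ there exists a continuous self-map $g$ of $Y$ with $\#\textup{Fix}(f) = \#\textup{Fix}(g)$. The only additional ingredient I need is the observation that the relation ``is isomorphic to'' is symmetric, which is what lets the same argument run in both directions. Accordingly, I would first fix an isomorphism $\Phi : X \longrightarrow Y$ and note that, by the very definition of isomorphism, $\Phi^{-1} : Y \longrightarrow X$ is again a digitally continuous bijection whose inverse $\Phi$ is continuous, so $\Phi^{-1}$ is itself an isomorphism.

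To prove $F(X) \subseteq F(Y)$, I would take an arbitrary element $n \in F(X)$. By the definition of the fixed point spectrum, $n = \#\textup{Fix}(f)$ for some continuous $f : X \longrightarrow X$. Applying the preceding corollary produces a continuous $g : Y \longrightarrow Y$ with $\#\textup{Fix}(g) = \#\textup{Fix}(f) = n$, so that $n \in F(Y)$. Concretely, one sets $g = \Phi \circ f \circ \Phi^{-1}$, which is continuous by Theorem \ref{th2}, and then checks that the restriction of $\Phi$ carries $\textup{Fix}(f)$ bijectively onto $\textup{Fix}(g)$; this is exactly the conjugation bookkeeping already carried out in the proof of Theorem \ref{th1}. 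Since $n$ was arbitrary, this yields $F(X) \subseteq F(Y)$.

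For the reverse inclusion I would simply interchange the roles of $X$ and $Y$: because $\Phi^{-1}$ is an isomorphism from $Y$ onto $X$, applying the same corollary to continuous self-maps of $Y$ gives $F(Y) \subseteq F(X)$. Combining the two inclusions yields $F(X) = F(Y)$, as required. I do not expect any genuine obstacle here, since the statement is essentially a direct consequence of the fixed-point isomorphism corollary together with the symmetry of the isomorphism relation; the only point deserving a moment's care is explicitly recording that $\Phi^{-1}$ is an isomorphism, so that the preceding corollary may legitimately be invoked with $X$ and $Y$ swapped. Everything else is routine set-theoretic bookkeeping.
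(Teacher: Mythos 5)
Your proposal is correct and follows essentially the same route as the paper: the paper obtains this corollary directly from the preceding count-matching corollary (itself the specialization $f_1 = f_2 = f$ of Theorem \ref{thm5}, via the conjugation $g = \Phi \circ f \circ \Phi^{-1}$), exactly the mechanism you invoke. Your explicit remark that $\Phi^{-1}$ is again an isomorphism, so the argument runs symmetrically to give both inclusions, is the one step the paper leaves implicit, and it is handled correctly.
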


\begin{question}\label{ques}
If $f_1 = f_2 = f$, do we always have
\begin{enumerate} [label=(\roman*) ]
     \item $MCF(f_1, f_2) = MF(f)$?
     \item $HFS(f_1, f_2) = S(f)$?
     \item $CFS(X) = F(X)$?
\end{enumerate}
\end{question}

In response to Questions \ref{ques}, we consider $X \subset \mathbb{Z}^3$ to be a digital image of unit cube of 8 points with 6-adjacency as shown in Figure \ref{fig3}. For any continuous mapping $f: X \longrightarrow X$, $S(f) = \{0,1,2,3,4,5,6,8\} = F(X)$ since $X$ is contractible \cite{BoxerSta19fixed}. Further, since $HFS(c, c) = \{0,1,2,3,4,5,6,8\}$ and $CFS(X) = \{0,1,2,3,4,5,6,8\},$ we have $HFS(c, c) = S(c) = F(X) = CFS(X).$  This further implies that $HFS(f, f) = CFS(X)$ for any continuous mapping $f: X \longrightarrow X.$

\begin{figure} \label{fig3}
\centering
\resizebox{3.5cm}{3cm}{%
\begin{tikzpicture}
	\draw[step=1cm,white,very thin] (0,0) grid (3,3);
    \draw [blue, thick] (0,0) -- (0,2);
    \draw [blue, thick] (0,2) -- (1,3);
    \draw [blue, thick] (1,3) -- (3,3);
    \draw [blue, thick] (1,1) -- (1,3);
    \draw [blue, thick] (0,0) -- (2,0);
    \draw [blue, thick] (0,2) -- (2,2);
    \draw [blue, thick] (3,1) -- (3,3);
    \draw [blue, thick] (2,0) -- (2,2);
    \draw [blue, thick] (2,0) -- (3,1);
    \draw [blue, thick] (1,1) -- (3,1);
    \draw [blue, thick] (2,2) -- (3,3);
    \draw [blue, thick] (0,0) -- (1,1);
    \filldraw[blue] (0,0) circle (2pt) node[left]{$x_0$};
    \filldraw[blue] (1,1) circle (2pt) node[left]{$x_3$};
    \filldraw[blue] (0,2) circle (2pt) node[left]{$x_1$};
    \filldraw[blue] (2,2) circle (2pt) node[right]{$x_5$};
    \filldraw[blue] (2,0) circle (2pt) node[below]{$x_4$};
    \filldraw[blue] (1,3) circle (2pt) node[above]{$x_2$};
    \filldraw[blue] (3,3) circle (2pt) node[right]{$x_6$};
    \filldraw[blue] (3,1) circle (2pt) node[right]{$x_7$};
\end{tikzpicture}}
\caption{A contractible 3-dimensional digital image with a 6-adjacency relation.}
\end{figure}
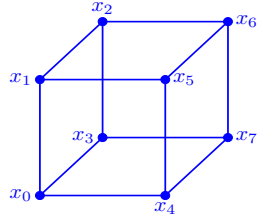

\begin{conjecture}
Let $X$ be a contractible image and $f : X \longrightarrow X$ be a continuous mapping. Then, $HFS(f, f) = S(f)$ and  $CFS(X) = F(X)$.
\end{conjecture}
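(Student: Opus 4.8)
The plan is to collapse both asserted equalities to the single statement $CFS(X)=F(X)$, and then attack that. First I would record that a contractible image is automatically connected: if $H$ is a homotopy with $H_0=id$ and $H_m=c$, where $c\equiv x_0$, then for each $x\in X$ the slice $H_x$ is a $\kappa$-path from $x$ to $x_0$, so every point is joined to $x_0$. Next, for any continuous $f:X\longrightarrow X$ we have $f=f\circ id\simeq f\circ c$ (composition respects digital homotopy), which is the constant map at $f(x_0)$; and since $X$ is connected, any two constant maps are homotopic, a path between their values inducing the required homotopy. Hence all continuous self-maps of $X$ are mutually homotopic, i.e. the homotopy class of any $f$ is the whole of $C(X,\kappa)$.

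From this collapse of homotopy classes I would deduce the two reductions at once. Because $\{g : g\simeq f\}=C(X,\kappa)$, we get $S(f)=\{\#\textup{Fix}(g)\mid g\ \textup{continuous}\}=F(X)$; and because both $g_1$ and $g_2$ range over all continuous maps, $HFS(f,f)=\{\#CF(g_1,g_2)\mid g_1,g_2\ \textup{continuous}\}=CFS(X)$. Consequently the identity $HFS(f,f)=S(f)$ is equivalent to $CFS(X)=F(X)$, and it is enough to prove the latter.

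For $CFS(X)=F(X)$ one inclusion is free: putting $f_1=f_2=f$ gives $CF(f,f)=\textup{Fix}(f)$ by Remark~\ref{rem1}, so $F(X)\subseteq CFS(X)$. The substance is the reverse inclusion $CFS(X)\subseteq F(X)$: for arbitrary continuous $f_1,f_2$, writing $A=C(f_1,f_2)=\textup{Fix}(f_1)\cap\textup{Fix}(f_2)$ and $k=\#A$, I must exhibit a single continuous $f:X\longrightarrow X$ with $\#\textup{Fix}(f)=k$. The natural attempt is to build such an $f$ with $\textup{Fix}(f)=A$ exactly, using the contracting homotopy to fold $X\setminus A$ away from its points while pinning those of $A$.

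The hard part will be precisely this realizability, and the $8$-point cube is the cautionary case: $F(X)$ need not be all of $\{0,\ldots,\#X\}$ (the cube omits $7$), so a subset of prescribed size need not occur as any fixed-point set. The structural fact to secure is that the cardinalities arising as $\#(\textup{Fix}(f_1)\cap\textup{Fix}(f_2))$ never land on such forbidden values. The cube illustrates the mechanism I would try to generalize: $\#A=7$ would force $\#\textup{Fix}(f_1),\#\textup{Fix}(f_2)\geq 7$, hence $f_1=f_2=id$ since $7\notin F(X)$, giving $\#A=8$ and a contradiction. Turning this bookkeeping into a uniform argument — bounding $k$ away from every forbidden cardinality, or producing a retraction-based $f$ whose fixed set matches $A$ in size on any contractible image — is the main obstacle, and the reason the statement is offered as a conjecture rather than a theorem.
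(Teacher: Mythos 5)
You should know at the outset that the paper contains no proof of this statement: it is posed explicitly as a conjecture, supported only by the computation on the $8$-point $6$-adjacent cube, where $HFS(c,c) = S(c) = F(X) = CFS(X) = \{0,1,2,3,4,5,6,8\}$. So there is no argument of the paper's to match yours against, and your proposal, candidly, does not claim to close the question either. What you do prove is correct and goes beyond what the paper records. The collapse argument is sound: for contractible $X$ the slices $H_x$ of a contracting homotopy show $X$ is connected; composition preserves digital homotopy (the induced maps of $f \circ H$ are continuous by Theorem \ref{th2}), so every continuous self-map is homotopic to a constant; and connectedness makes any two constants homotopic via a $\kappa$-path between their values, each stage $H_t$ being constant and each track $H_x$ being the path. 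Hence the homotopy class of any $f$ is all of $C(X,\kappa)$, giving $S(f) = F(X)$ and $HFS(f,f) = CFS(X)$, so the two asserted equalities are indeed equivalent to the single inclusion $CFS(X) \subseteq F(X)$ (the reverse inclusion being Remark \ref{rem1}). This is a genuine sharpening: the paper verifies the equalities only on one example, whereas you show the conjecture is exactly the claim that intersections $\mathrm{Fix}(f_1) \cap \mathrm{Fix}(f_2)$ never realize a cardinality outside $F(X)$. One notational slip: you write $A = C(f_1,f_2)$, but $C(f_1,f_2)$ is the coincidence set $\{x : f_1(x) = f_2(x)\}$; you mean $A = CF(f_1,f_2) = \mathrm{Fix}(f_1) \cap \mathrm{Fix}(f_2)$.

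The gap you name is real and is the entire content of the conjecture. Your "natural attempt" — building $f$ with $\mathrm{Fix}(f) = A$ exactly by folding $X \setminus A$ along the contraction — cannot work in general, as your own cube discussion shows: no continuous self-map of the cube has exactly $7$ fixed points, so a set of size $7$ is never realizable as a fixed-point set, and the argument must instead show such sizes never arise as $\#\bigl(\mathrm{Fix}(f_1) \cap \mathrm{Fix}(f_2)\bigr)$. Your bookkeeping does establish $7 \notin CFS(\text{cube})$ (if $\#A = 7$ then both maps fix at least $7$ points, forcing both to be the identity since $7 \notin F(X)$, a contradiction), which is consistent with the paper's computation; but this exploits the specific fact that the only forbidden value is $\#X - 1$, and no uniform mechanism for an arbitrary contractible image is offered — note also that candidates like $f_1 \circ f_2$ may acquire fixed points outside $A$, so no single composite map obviously works. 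In short: your reduction is a correct and useful lemma the paper does not state, but the statement remains, as in the paper, a conjecture.
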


\section{Retracts of $X$}

In this section, we study how retractions interact with the coincidence and common fixed point spectra. To begin with, it is natural to ask whether or not, whenever $A$ is a subset of an image $X$, we will have $CS(A) \subseteq CS(X)$. The answer is negative as shown by the following example.

\begin{example}
Let $X$ be the digital image in Fig. \ref{fig3}. If $A = X \backslash \{x_0\}$ then $A \subset X$ and $CS(A)=\{0,1,2,3,4,5,6,7\} \not\subseteq \{0,1,2,3,4,5,6,8\} =CS(X).$
\end{example}

However, if $A \subset X$ is a retract of $X$, then we will have an affirmative answer as shown by Theorem \ref{th4} below.

\begin{definition} \cite{Boxer94digitally}
Let $A$ be a subset of a digital image $X.$ A continuous function $r : X \longrightarrow A$ is called a retraction, and $A$ is a retract of $X,$ if $r(a) = a$ for all $a \in A.$ Moreover, $r$ is called a $\kappa$-deformation retraction, and $A$ is a $\kappa$-deformation retract of $X,$ if $r$ satisfies $i \circ r \simeq_{\kappa} id,$ where $i : A \longrightarrow X$ is the inclusion map. 
\end{definition}

\begin{theorem} \label{th4}
Let $A$ be a retract of an image $X.$ Then \[CS(A) \subseteq CS(X).\]
\end{theorem}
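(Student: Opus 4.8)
The plan is to mimic the transport idea of Theorem~\ref{th1}, with the isomorphism replaced by a retraction. First I would fix a retraction $r\colon X\to A$ together with the inclusion $i\colon A\hookrightarrow X$, so that $r\circ i=\mathrm{id}_A$. To prove $CS(A)\subseteq CS(X)$, take an arbitrary $k\in CS(A)$; by definition there are continuous $h_1,h_2\colon A\to A$ with $\#C(h_1,h_2)=k$. I would then push these maps forward to $X$ by setting $f_j=i\circ h_j\circ r$ for $j=1,2$, each of which is continuous by Theorem~\ref{th2}.

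Because $f_1(x)=f_2(x)$ holds exactly when $h_1(r(x))=h_2(r(x))$, i.e.\ when $r(x)\in C(h_1,h_2)$, one obtains $C(f_1,f_2)=r^{-1}\!\bigl(C(h_1,h_2)\bigr)$; and since $r|_A=\mathrm{id}_A$, this gives $C(f_1,f_2)\cap A=C(h_1,h_2)$, so the coincidences lying in $A$ are precisely the original $k$. It is worth noting that the special case in which one map is the identity already transports without loss: taking $f_2=\mathrm{id}_X$ yields $C(f_1,\mathrm{id}_X)=\mathrm{Fix}(f_1)$, and as $f_1$ has image in $A$ every fixed point lies in $A$, whence $\#\mathrm{Fix}(f_1)=\#\mathrm{Fix}(h_1)$ exactly. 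This already localizes the entire difficulty to genuine (non-fixed-point) coincidences.

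The main obstacle is the behaviour off $A$: a point $x\in X\setminus A$ also belongs to $C(f_1,f_2)$ whenever $r(x)\in C(h_1,h_2)$, so in general $\#C(f_1,f_2)=k+\#\{x\in X\setminus A: r(x)\in C(h_1,h_2)\}$, which may strictly exceed $k$ because $r$ need not be injective---already for a one-point retract $A=\{a_0\}$ the maps $f_j$ collapse $X$ and over-count. Hence the crux is to realize the value $k$ exactly, which I would attempt by modifying $f_1$ on $X\setminus A$ so as to disagree with $f_2$ at each offending point, redirecting its image to an adjacent vertex of $X$ (available since $X$ is connected) in order to destroy the spurious coincidences while keeping $f_1$ continuous across the boundary of $A$ and leaving $C(f_1,f_2)\cap A=C(h_1,h_2)$ intact. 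Establishing that such a continuity-preserving adjustment always exists---equivalently, that every count attainable on a retract is attainable on all of $X$---is the hard step, and it is exactly here that the retract hypothesis must be used, since the example preceding the theorem shows that $CS(A)\subseteq CS(X)$ can fail for an arbitrary subimage.
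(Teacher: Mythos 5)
Your diagnosis is exactly right, and it is in fact sharper than the paper's own proof: the paper makes the same transport $g_j = i\circ f_j\circ r$ and then simply asserts $C(f_1,f_2)=C(g_1,g_2)$, whereas, as you compute, $C(g_1,g_2)=r^{-1}\bigl(C(f_1,f_2)\bigr)$, which is in general strictly larger. (Concretely: $X=[0,1]_{\mathbb{Z}}$, $A=\{0\}$, $r\equiv 0$, $h_1=h_2=\mathrm{id}_A$ gives $k=1$ but $g_1=g_2\equiv 0$ with $\#C(g_1,g_2)=2$; so the paper's claimed equality is false.) The transport argument is sound only in the fixed point situation --- your own observation that a fixed point of $i\circ h\circ r$ lies in the image and hence in $A$ is precisely why the Boxer--Staecker corollary $F(A)\subseteq F(X)$ is correct --- but it does not survive the passage to coincidences. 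That said, your proposal stops where the actual content of the theorem begins: the ``continuity-preserving adjustment'' of $f_1$ on $X\setminus A$ is asserted, not established. A greedy redirection at a spurious point $x$ must produce a value different from $f_2(x)$ that is adjacent-or-equal to $f_1(y)$ for \emph{every} neighbor $y$ of $x$, including neighbors in $A$ (where $f_1$ cannot be altered) and other spurious neighbors whose values are being changed simultaneously; nothing in your sketch guarantees such a value exists, so this is a genuine gap rather than a routine verification.

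The gap can be closed, but most easily by discarding the transported maps altogether. Since $X$ is connected with $\#X\geq 2$ (if $\#X=1$ then $A=X$ and there is nothing to prove), choose $p\leftrightarrow q$ in $X$; for an arbitrary $S\subseteq X$ set $f_1\equiv p$ and let $f_2(x)=p$ for $x\in S$, $f_2(x)=q$ for $x\notin S$. Both maps are continuous because all values lie in $\{p,q\}$ and $p\Leftrightarrow q$, and $C(f_1,f_2)=S$ exactly. Hence $CS(X)=\{0,1,\ldots,\#X\}$, and since $CS(A)\subseteq\{0,1,\ldots,\#A\}$ the inclusion $CS(A)\subseteq CS(X)$ follows at once. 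Note that this argument never uses that $A$ is a retract, which also undercuts the premise of your closing sentence: the example preceding Theorem~\ref{th4} is really a statement about the fixed point spectrum $F$, not about $CS$, since the same two-valued construction shows $7\in CS(X)$ for the $8$-point cube, while $7\notin F(X)$ is the Boxer--Staecker computation. The retract hypothesis is what rescues $F(A)\subseteq F(X)$; for $CS$ as defined, the containment holds for every connected subimage.
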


\begin{proof}
Let $f_1, f_2 : A \longrightarrow A$ be continuous functions and $r : X \longrightarrow A$ be a retraction mapping. Now, we define the functions $g_1, g_2 : X \longrightarrow X$ as $g_1 = i \circ f_1 \circ r$  and $g_2 = i \circ f_2 \circ r,$  where $i : A \longrightarrow X$ is the inclusion map. So, from Theorem \ref{th2}, the functions $g_1$ and $g_2$ are continuous. Therefore, we have $g_1(x) = f_1(x)$ if and only if $x \in A$ and similarly $g_2(x) = f_2(x)$ if and only if $x \in A.$ Thus $C(f_1, f_2) \, \textup{=} \, C(g_1, g_2),$ hence the assertion follows immediately since $f_1$ and $f_2$ are arbitrarily chosen. \qed
\renewcommand{\qed}{}  
\end{proof}

\begin{theorem} \label{cor}
Let $A$ be a retract of an image $X.$ Then \[CFS(A) \subseteq CFS(X).\]
\end{theorem}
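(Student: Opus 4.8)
The plan is to mirror the proof of Theorem~\ref{th4} almost verbatim, replacing the coincidence point set $C(\cdot,\cdot)$ by the common fixed point set $CF(\cdot,\cdot)$, and tracking the one extra condition that distinguishes a common fixed point from a mere coincidence point, namely that the common value must equal the argument. First I would take an arbitrary pair of continuous maps $f_1, f_2 : A \longrightarrow A$ and fix a retraction $r : X \longrightarrow A$ together with the inclusion $i : A \longrightarrow X$. As in Theorem~\ref{th4}, I define $g_1 = i \circ f_1 \circ r$ and $g_2 = i \circ f_2 \circ r$, which are continuous self-maps of $X$ by Theorem~\ref{th2}. The goal is then to show $CF(f_1,f_2) = CF(g_1,g_2)$, which immediately yields $\#CF(f_1,f_2) \in CFS(X)$, and since $f_1, f_2$ are arbitrary this gives $CFS(A) \subseteq CFS(X)$.

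The key step is the set equality $CF(f_1,f_2) = CF(g_1,g_2)$. For the forward inclusion, take $a \in CF(f_1,f_2)$, so $a \in A$ and $f_1(a) = f_2(a) = a$. Since $a \in A$ we have $r(a) = a$, hence $g_i(a) = i(f_i(r(a))) = i(f_i(a)) = i(a) = a$ for $i = 1,2$ (identifying $i(a)$ with $a \in X$), so $g_1(a) = g_2(a) = a$ and $a \in CF(g_1,g_2)$. For the reverse inclusion, suppose $x \in CF(g_1,g_2)$, so $g_1(x) = g_2(x) = x$. Here the crucial observation is that $g_1(x) = i(f_1(r(x)))$ lies in $i(A) = A$; since $g_1(x) = x$, this forces $x \in A$, and then $r(x) = x$ collapses $g_i(x)$ back to $f_i(x)$, giving $f_1(x) = f_2(x) = x$, i.e. $x \in CF(f_1,f_2)$. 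This last point is exactly what upgrades the argument from coincidence points to common fixed points: for the coincidence version one only needed $g_i = f_i$ on $A$, whereas here the condition $g_i(x) = x$ itself certifies membership in $A$ because the image of $g_i$ is contained in $A$.

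The step I expect to require the most care is the reverse inclusion, specifically the inference that $g_1(x) = x$ implies $x \in A$. In the coincidence setting (Theorem~\ref{th4}) one argues that $g_i$ agrees with $f_i$ precisely on $A$; here one must instead exploit that any value of $g_i$ automatically sits inside $A$, so a fixed point of $g_i$ is constrained to $A$ before one can even invoke $r(x) = x$. Provided this is handled cleanly, the remainder is routine and parallels Theorem~\ref{th4}. It would therefore be natural to write the proof by pointing to the structural parallel: the assertion follows from the same construction $g_i = i \circ f_i \circ r$ as in Theorem~\ref{th4}, once one verifies that the fixed-point condition restricts attention to $A$.

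\begin{proof}
Let $f_1, f_2 : A \longrightarrow A$ be continuous functions and $r : X \longrightarrow A$ be a retraction mapping. Define $g_1, g_2 : X \longrightarrow X$ by $g_1 = i \circ f_1 \circ r$ and $g_2 = i \circ f_2 \circ r$, where $i : A \longrightarrow X$ is the inclusion map. By Theorem~\ref{th2}, $g_1$ and $g_2$ are continuous. If $a \in CF(f_1, f_2)$, then $a \in A$ and $f_1(a) = f_2(a) = a$; since $r(a) = a$, it follows that $g_1(a) = g_2(a) = a$, so $a \in CF(g_1, g_2)$. Conversely, if $x \in CF(g_1, g_2)$, then $x = g_1(x) = i(f_1(r(x))) \in A$, whence $r(x) = x$ and therefore $f_1(x) = f_2(x) = x$, so $x \in CF(f_1, f_2)$. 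Thus $CF(f_1, f_2) = CF(g_1, g_2)$, and since $f_1$ and $f_2$ were arbitrary, $CFS(A) \subseteq CFS(X)$. \qed
\renewcommand{\qed}{}
\end{proof}
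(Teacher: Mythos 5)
Your proof is correct and takes essentially the same approach as the paper: the paper's own proof of this theorem simply defers to the construction $g_j = i \circ f_j \circ r$ from Theorem~\ref{th4}, which is exactly what you carry out. Your careful handling of the reverse inclusion --- noting that $g_j(x) = x$ forces $x \in i(A) = A$ before one can invoke $r(x) = x$ --- is precisely the detail that makes the transplanted argument work for common fixed points; indeed it is cleaner than the situation in Theorem~\ref{th4} itself, where the claimed equality $C(f_1,f_2) = C(g_1,g_2)$ can fail because a point $x \notin A$ with $r(x) \in C(f_1,f_2)$ is a coincidence point of $g_1, g_2$, a subtlety that the fixed-point condition automatically excludes.
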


\begin{proof}
The assertions follows from a similar argument to the proof of Theorem \ref{th4} .
\end{proof}


\begin{corollary} \cite{BoxerSta19fixed}
Let $A$ be a retract of an image $X.$ Then \[F(A) \subseteq F(X).\]
\end{corollary}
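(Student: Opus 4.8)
The plan is to specialize the retraction argument already used for Theorem \ref{th4} and Theorem \ref{cor} to the case of a single self-map rather than a pair. Given an arbitrary continuous $f : A \longrightarrow A$, I would let $r : X \longrightarrow A$ be the retraction and $i : A \longrightarrow X$ the inclusion map, and form the self-map $g := i \circ f \circ r : X \longrightarrow X$. By Theorem \ref{th2} this $g$ is continuous, so it is a legitimate candidate for contributing a value to $F(X)$, and the whole problem reduces to comparing $\textup{Fix}(f)$ with $\textup{Fix}(g)$.

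The key step is to show that $f$ and $g$ have exactly the same fixed points. For $x \in A$ one has $r(x) = x$, so $g(x) = i(f(x)) = f(x)$; hence such an $x$ is fixed by $g$ precisely when it is fixed by $f$, giving $\textup{Fix}(f) \subseteq \textup{Fix}(g)$. The only point requiring care, and the single place where the argument could quietly fail, is ruling out spurious fixed points outside $A$: for $x \in X \backslash A$ we have $g(x) = i(f(r(x)))$, which lies in $A$ because $f$ maps into $A$; thus $g(x) \in A$ while $x \notin A$, forcing $g(x) \neq x$. Therefore no point of $X \backslash A$ is fixed by $g$, and altogether $\textup{Fix}(g) = \textup{Fix}(f)$.

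Consequently $\# \textup{Fix}(f) = \# \textup{Fix}(g)$, and since $g$ is a continuous self-map of $X$, this number lies in $F(X)$. As $f$ was an arbitrary continuous self-map of $A$, every element of $F(A)$ occurs in $F(X)$, that is, $F(A) \subseteq F(X)$. Equivalently, this is exactly the proof of Theorem \ref{cor} read with $f_1 = f_2 = f$, using Remark \ref{rem1}(i) to identify $CF(f,f)$ with $\textup{Fix}(f)$. I would emphasize that one cannot simply invoke Theorem \ref{cor} together with Remark \ref{rem1}(ii), since the chain $F(A) \subseteq CFS(A) \subseteq CFS(X)$ only places $F(A)$ inside $CFS(X)$, which is generally strictly larger than $F(X)$; it is precisely the explicit construction $g = i \circ f \circ r$, landing $g$ as an honest self-map of $X$ with $\textup{Fix}(g) \subseteq A$, that delivers membership in $F(X)$ itself.
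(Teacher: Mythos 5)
Your proposal is correct and is essentially the paper's own argument: the corollary is just the proof of Theorem \ref{cor} (and Theorem \ref{th4}) specialized to $f_1 = f_2 = f$, namely forming $g = i \circ f \circ r$ and checking that $\textup{Fix}(g) = \textup{Fix}(f)$ because points of $X \backslash A$ are sent into $A$ and hence cannot be fixed. Your side remark is also well taken --- the statement of Theorem \ref{cor} combined with Remark \ref{rem1} only yields $F(A) \subseteq CFS(X)$, so the explicit construction is indeed what places the count in $F(X)$ itself.
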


\section{Divergence Degree}

In this section, we introduce the notion of divergence degree of a point $x$ in an image $X$, which give us an estimate of ``non-coincident indicator of the point $x$". 

Throughout this section, $f_1$ and $f_2$ are self maps on $X$. We begin with presenting an important definition we use to define the degree at which two given functions differ at a point $x$. This we call the complement of the coincidence point set, which we denote by $\overline{C(f_1, f_2)}$ and define as:
\[\overline{C(f_1, f_2)} := \{x \in X \, | \, f_1(x) \not= f_2(x)\}.\]
Whenever $f_1(x) \not= f_2(x)$, we say that $f_1$ and $f_2$ does not meet at point $x$ in $X.$

\begin{definition} 
Let $(X, \kappa)$ be a digital image with $\# X > 1$ and $x \in X.$ Then the ``non-coincident indicator of $x$" which we call the ``Divergence Degree of $x$" is define as:
\[D(x) := \min \{\#\overline{C(f_1, f_2)} \, | \, f_1(x) \not= f_2(x) \mbox{ and } f_1,f_2 \mbox{ are continuous} \}.\]
\end{definition}

\begin{theorem} 
Let $X$ be a connected digital image with $n = \#X > 1.$ Then \[n-1 \in CS(X) \mbox{ if and only if there is some } x \in X \mbox{ with } D(x) = 1.\]
\end{theorem}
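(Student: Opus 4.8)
The plan is to reduce both sides of the equivalence to a single concrete statement, namely that there exist continuous self-maps $f_1,f_2$ of $X$ whose coincidence complement $\overline{C(f_1,f_2)}$ has exactly one element. The bridge between the two sides is the elementary partition identity
\[
\#C(f_1,f_2) + \#\overline{C(f_1,f_2)} = \#X = n,
\]
which holds for every pair of continuous self-maps because each point of $X$ either is or is not a coincidence point. In particular, $\#C(f_1,f_2) = n-1$ if and only if $\#\overline{C(f_1,f_2)} = 1$, so membership $n-1 \in CS(X)$ is exactly the assertion that some pair attains $\#\overline{C(f_1,f_2)} = 1$.

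First I would record the trivial but essential lower bound: whenever $f_1(x) \neq f_2(x)$ the point $x$ lies in $\overline{C(f_1,f_2)}$, so $\#\overline{C(f_1,f_2)} \geq 1$, and hence $D(x) \geq 1$ for every $x$. (The quantity $D(x)$ is well defined because $n > 1$ supplies an admissible pair, e.g.\ $id$ together with a constant map $c$ whose value differs from $x$.) Consequently $D(x) = 1$ is equivalent to the existence of continuous $f_1,f_2$ with $f_1(x) \neq f_2(x)$ and $\overline{C(f_1,f_2)} = \{x\}$, that is, a pair realizing the minimal value $1$ with $x$ as its unique non-coincidence point.

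For the forward direction I would assume $n-1 \in CS(X)$, take a witnessing pair $f_1,f_2$ with $\#C(f_1,f_2) = n-1$, and apply the partition identity to get $\#\overline{C(f_1,f_2)} = 1$. Writing $\overline{C(f_1,f_2)} = \{x\}$, this same pair satisfies $f_1(x) \neq f_2(x)$ and $\#\overline{C(f_1,f_2)} = 1$, so $D(x) \leq 1$; combined with the lower bound this gives $D(x) = 1$. For the converse, if $D(x) = 1$ for some $x$, then a minimizing pair $f_1,f_2$ has $\#\overline{C(f_1,f_2)} = 1$, whence $\#C(f_1,f_2) = n-1$ by the partition identity, and therefore $n-1 \in CS(X)$.

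There is no deep obstacle here; the entire content lies in matching the quantifier in the definition of $D(x)$ (minimum over pairs with $f_1(x) \neq f_2(x)$) with the unique non-coincidence point produced on the $CS(X)$ side. The one point meriting care is that the witnessing point is \emph{the same} $x$ on both sides, which is forced: when $\#\overline{C(f_1,f_2)} = 1$ the single element of $\overline{C(f_1,f_2)}$ must be precisely the point at which the two maps differ, so it simultaneously certifies $D(x)=1$ and $\#C(f_1,f_2)=n-1$.
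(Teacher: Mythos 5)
Your proposal is correct and follows essentially the same route as the paper's own (very terse) proof: both rest on the observation that $\#C(f_1,f_2) = n-1$ for self-maps $f_1,f_2$ exactly when there is a unique non-coincidence point, which is then precisely the point witnessing $D(x)=1$. Your version simply makes explicit the partition identity $\#C(f_1,f_2)+\#\overline{C(f_1,f_2)}=n$, the lower bound $D(x)\geq 1$, and the well-definedness of $D(x)$, all of which the paper leaves implicit.
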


\begin{proof}
It is not too difficult to see that $n-1 \in CS(X)$ if and only if there exist $f_1, f_2 \in C(X, \kappa)$ with exactly $n-1$ coincidence points. i.e. the only $x \in X$ not coincident by $f_1$ and $f_2$ has $D(x) = 1.$ Hence proving the result.
\qed
\renewcommand{\qed}{}  
\end{proof}

\begin{example}
Let $X$ be the digital image in Fig. \ref{fig1}. Let $f,p_v$ and $p_h$ be self maps on $X$ representing; $180^{\circ}$ rotation of $X$, vertical flip of $X$ and horizontal flip of $X$ respectively, then $f,p_v$ and $p_h$ are continuous. Let $g_1,g_2,g_3 : X \longrightarrow X$ be mappings define as:
\[g_1(7) = 5, g_1(11) = 10, g_1(18) = 16, g_1(x) = x \mbox{ for } x \in X \backslash \{7, 11, 18\},\]
\[g_2(1) = 3, g_2(8) = 9, g_2(12) = 14, g_2(x) = x \mbox{ for } x \in X \backslash \{1, 8, 12\}\] and
\[g_3(1) = 3, g_3(7) = 5, g_3(8) = 9, g_3(11) = 10, g_3(12) = 14, g_3(18) = 16, g_3(x) = x\] 
for $x \in X \backslash \{1, 7, 8, 11, 12, 18\}.$ Then $g_1,g_2$ and $g_3$ are all continuous.

Let $h_1 : X \longrightarrow X$ be the mappings that maps the top bar into the bottom bar and fixes all other points and $h_2 : X \longrightarrow X$ be the mappings that maps the bottom bar into the top bar and fixes all other points. These are all the possible non trivial (different from $id$ and $c$) continuous functions on $X$ providing different coincidence point sets.

So, after some computations we obtain  
\[D(x) = 3 \hspace{0.5cm} \mbox{ for } x \in \{1,7,8,11,12,18\},\] \[D(x) = 14 \hspace{0.5cm} \mbox{ for } x \in \{2,3,5,6,13,14,16,17\}\] and \[D(x) = 17 \hspace{0.5cm} \mbox{ for } x \in \{4,9,10,15\}.\]
\end{example}

\section{Conclusion}
In this article, we introduced, studied and investigated some properties of the coincidence point set of digitally continuous maps. Following the Rosenfeld graphical model which seems more combinatorial than topological, we achieved results that are not analogous to the classical topological fixed point theory, for instance, in classical coincidence theory the only interesting homotopy invariant count of the number of coincidence points is $MC(f_1,f_2)$. Whereas, here we introduced $HCS(f_1,f_2)$ which is not studied in the classical coincidence theory. We also introduced and studied some topological invariants related to coincidence and common fixed point sets for continuous maps on a digital image. Moreover, we studied how these coincidence point sets are affected by rigidity and deformation retraction. Also, we briefly introduced the concept of divergence degree of a point in a digital image and illustrated by example that D(x) can assume different values for different choice of point. Lastly, we are optimistic that these properties will be applicable in image processing and its related disciplines in the nearest future.

\section{Acknowledgement}
The authors acknowledge the financial support provided by the Center of Excellence in Theoretical and Computational Science (TaCS-CoE), Faculty of Science, KMUTT. The first and the third authors were supported by ``the Petchra Pra Jom Klao Ph.D. Research Scholarship" from `King Mongkut's University of Technology Thonburi" (Grant No. 35/2017 and 38/2018 respectively). Finally, the authors would like to thank Assoc. Prof. Peter Christoper Staecker for his careful reading and his valuable suggestions to the improvement of this paper, especially his idea of Example \ref{exp2}.

%
%
%
%

\bibliographystyle{plain} 
\bibliography{reference}         

\begin{thebibliography}{10}

\bibitem{AbdullahiAzam17A}
Muhammad~Sirajo Abdullahi and Akbar Azam.
\newblock ${L}$-fuzzy fixed point theorems for ${L}$-fuzzy mappings via
  $\beta_{F_L}$-admissible with applications.
\newblock {\em Journal of Uncertainty Analysis and Applications}, 5(2):1--13,
  2017.

\bibitem{AbdullahiPoom18}
Muhammad~Sirajo Abdullahi and Poom Kumam.
\newblock Partial $b_v(s)$-metric spaces and fixed point theorems.
\newblock {\em Journal of Fixed Point Theory and Applications}, 20(3):113,
  2018.

\bibitem{Artzyetal81theory}
Ehud Artzy, Gideon Frieder, and Gabor~T Herman.
\newblock The theory, design, implementation and evaluation of a
  three-dimensional surface detection algorithm.
\newblock {\em Computer graphics and image processing}, 15(1):1--24, 1981.

\bibitem{Azametal09}
Akbar Azam, Muhammad Arshad, and Ismat Beg.
\newblock Fixed points of fuzzy contractive and fuzzy locally contractive maps.
\newblock {\em Chaos, Solitons and Fractals}, 42(5):2836--2841, 2009.

\bibitem{Banach}
Stefan Banach.
\newblock Sur les operations dans les ensembles abstraits et leur application
  aux equations integrales.
\newblock {\em Fund. Math}, 3(1):133--181, 1922.

\bibitem{Bertrand94simple}
Giles Bertrand.
\newblock Simple points, topological numbers and geodesic neighborhoods in
  cubic grids.
\newblock {\em Pattern recognition letters}, 15(10):1003--1011, 1994.

\bibitem{Boxer94digitally}
Laurence Boxer.
\newblock Digitally continuous functions.
\newblock {\em Pattern Recognition Letters}, 15(8):833--839, 1994.

\bibitem{Boxer99classical}
Laurence Boxer.
\newblock A classical construction for the digital fundamental group.
\newblock {\em Journal of Mathematical Imaging and Vision}, 10(1):51--62, 1999.

\bibitem{Boxeretal16digital}
Laurence Boxer, Ozgur Ege, Ismet Karaca, Jonathan Lopez, and Joel Louwsma.
\newblock Digital fixed points, approximate fixed points, and universal
  functions.
\newblock {\em Applied General Topology}, 17(2):159--172, 2016.

\bibitem{BoxerSta18remarks}
Laurence Boxer and P~Christopher Staecker.
\newblock Remarks on fixed point assertions in digital topology.
\newblock {\em Applied General Topology, to appear.
  https://arxiv.org/abs/1806.06110}, 2018.

\bibitem{BoxerSta19fixed}
Laurence Boxer and P~Christopher Staecker.
\newblock Fixed point sets in digital topology, 1.
\newblock {\em arXiv preprint arXiv:1901.11093}, 2019.

\bibitem{Brooks72removing}
Robin Brooks.
\newblock On removing coincidences of two maps when only one, rather than both,
  of them may be deformed by a homotopy.
\newblock {\em Pacific Journal of Mathematics}, 40(1):45--52, 1972.

\bibitem{Chen04discrete}
Lee Chen.
\newblock {\em Discrete surfaces and manifolds}.
\newblock 2004.

\bibitem{ChenRong10digital}
Li~Chen and Yongwu Rong.
\newblock Digital topological method for computing genus and the {B}etti
  numbers.
\newblock {\em Topology and its Applications}, 157(12):1931--1936, 2010.

\bibitem{ChenZhang93digital}
Li~Chen and Jianping Zhang.
\newblock Digital manifolds: {A}n intuitive definition and some properties.
\newblock In {\em Proceedings on the second ACM symposium on Solid modeling and
  applications}, pages 459--460. ACM, 1993.

\bibitem{Chen14digitalndiscrete}
Li~M Chen.
\newblock {\em Digital and discrete geometry: {T}heory and algorithms}.
\newblock Springer, 2014.

\bibitem{Haarmann15homotopy}
Jason Haarmann, Meg~P Murphy, Casey~S Peters, and P~Christopher Staecker.
\newblock Homotopy equivalence in finite digital images.
\newblock {\em Journal of Mathematical Imaging and Vision}, 53(3):288--302,
  2015.

\bibitem{Han05nonproduct}
Sang-Eon Han.
\newblock Non-product property of the digital fundamental group.
\newblock {\em Information Sciences}, 171(1-3):73--91, 2005.

\bibitem{Herman93oriented}
Gabor~T Herman.
\newblock Oriented surfaces in digital spaces.
\newblock {\em CVGIP: Graphical Models and Image Processing}, 55(5):381--396,
  1993.

\bibitem{Herman98geometry}
Gabor~T Herman.
\newblock Geometry of digital spaces.
\newblock In {\em Vision Geometry VII}, volume 3454, pages 2--14. International
  Society for Optics and Photonics, 1998.

\bibitem{Jiang83lectures}
Boju Jiang.
\newblock Lectures on {N}ielsen fixed point theory.
\newblock {\em Contemp. Math.}, 14, 1983.

\bibitem{Khalimsky87motion}
Efim Khalimsky.
\newblock Motion, deformation, and homotopy in finite spaces.
\newblock In {\em Proceedings IEEE International Conferences on Systems, Man,
  and Cybernetics}, pages 227--234, 1987.

\bibitem{KletteRosen04digital}
Reinhard Klette and Azriel Rosenfeld.
\newblock {\em Digital geometry: {G}eometric methods for digital picture
  analysis}.
\newblock Elsevier, 2004.

\bibitem{Kongetal92guest}
T~Yung Kong, Ralph Kopperman, and Paul~R Meyer.
\newblock Guest editors' preface to special issue on digital topology.
\newblock {\em Topology and its Applications}, 46(3):173--179, 1992.

\bibitem{KongRoscoe85continuous}
T~Yung Kong and A~William Roscoe.
\newblock Continuous analogs of axiomatized digital surfaces.
\newblock {\em Computer Vision, Graphics, and Image Processing}, 29(1):60--86,
  1985.

\bibitem{Kongetal92concepts}
T~Yung Kong, A~William Roscoe, and Azriel Rosenfeld.
\newblock Concepts of digital topology.
\newblock {\em Topology and its Applications}, 46(3):219--262, 1992.

\bibitem{KongRosen96topological}
T~Yung Kong and Azriel Rosenfeld.
\newblock {\em Topological algorithms for digital image processing}, volume~19.
\newblock Elsevier, 1996.

\bibitem{Melin03connectedness}
Erik Melin.
\newblock {\em Digital geometry and {K}halimsky spaces}.
\newblock PhD thesis, Uppsala University, 2003.

\bibitem{MorgenRosen81surfaces}
David~G Morgenthaler and Azriel Rosenfeld.
\newblock Surfaces in three-dimensional digital images.
\newblock {\em Information and Control}, 51(3):227--247, 1981.

\bibitem{MyloPav71topological}
John~P Mylopoulos and Theodosios Pavlidis.
\newblock On the topological properties of quantized spaces, {I}: {T}he notion
  of dimension.
\newblock {\em Journal of the ACM (JACM)}, 18(2):239--246, 1971.

\bibitem{Nadler}
Sam~B Nadler~Jr.
\newblock Multi-valued contraction mappings.
\newblock {\em Pacific J. Math}, 30(2):475--488, 1969.

\bibitem{Rosen70connectivity}
Azriel Rosenfeld.
\newblock Connectivity in digital pictures.
\newblock {\em Journal of the ACM (JACM)}, 17(1):146--160, 1970.

\bibitem{Rosen79digital}
Azriel Rosenfeld.
\newblock Digital topology.
\newblock {\em The American Mathematical Monthly}, 86(8):621--630, 1979.

\bibitem{Rosen86continuous}
Azriel Rosenfeld.
\newblock ‘continuous’ functions on digital pictures.
\newblock {\em Pattern Recognition Letters}, 4(3):177--184, 1986.

\bibitem{SintuKumam11}
Wutiphol Sintunavarat and Poom Kumam.
\newblock Common fixed point theorems for a pair of weakly compatible mappings
  in fuzzy metric spaces.
\newblock {\em Journal of Applied Mathematics}, 2011:1--14, 2011.

\end{thebibliography}

\end{document}